\newcommand{\mtd}{\raise.17ex\hbox{$\scriptstyle\mathtt{\sim}$}}
\newcommand{\nmtd}{\raise.17ex\hbox{$\scriptstyle\mathtt{\not\sim}$}}
\newcommand{\lold}{\lambda}
\newcommand{\lnew}{\lambda'}
\DeclareMathOperator{\VOL}{Vol}
\newcommand{\vol}[1]{\VOL\left(#1\right)}
\newcommand{\diam}[1]{\DIAM\left(#1\right)}
\renewcommand{\diam}[1]{D_{#1}}
\newcommand{\R}{\mathbb{R}}
\renewcommand{\L}{\mathcal{L}}
\newcommand{\one}{\ensuremath{\mathbbm{1}}}
\newcommand{\bigOh}[1]{\ensuremath{\mathcal{O}\!\left( #1 \right)}}
\newcommand{\bigTheta}[1]{\ensuremath{\mathit{\theta}\!\left( #1 \right)}}
\newcommand{\ba}{\backslash}
\newcommand{\eps}{\epsilon}
\newtheorem{thm}{Theorem}[section]
\newtheorem{lemma}[thm]{Lemma}
\newtheorem{cor}[thm]{Corollary}
\theoremstyle{definition}
\newtheorem*{exa}{Example}
\title{Bounds on Geometric Eigenvalues of Graphs}
\author{Mary Radcliffe}
\author{Christopher Williamson}
\thanks{M. Radcliffe, University of Washington, Seattle}
\thanks{C. Williamson, The Chinese University of Hong Kong, supported by Hong Kong RGC GRF grant CUHK410112}
\begin{document}
\maketitle

\begin{abstract}
The smallest nonzero eigenvalue of the normalized Laplacian matrix of a graph has been extensively studied and shown to have many connections to properties of the graph. We here study a generalization of this eigenvalue, denoted $\lambda(G, X)$, introduced by Mendel and Naor, obtained by embedding the vertices of the graph $G$ into a metric space $X$. We consider general bounds on $\lambda(G,X)$ and on $\lambda(G, H)$, where $H$ is a graph under the standard distance metric, generalizing some existing results for the standard eigenvalue. We consider how $\lambda(G, H)$ is affected by changes to $G$ or $H$, and show that $\lambda$ is not monotonic in either $G$ or $H$.

\end{abstract}

\section{Introduction}

The use of eigenvalues to study graphs has a long history in graph theory. Since at least the 1980s, the eigenvalues of various matrices have been used to study properties of a graph, including many connectivity features, distance and diameter properties, automorphisms, random walks, and a litany of graph invariants. Results involving spectra of a graph have been catalogued in many surveys and books, such as \cite{beineke2004topics, chung1997spectral, cvetkovic1988recent, cvetkovic1990largest, spielman2007spectral, spielman2010algorithms}, for example.

Of particular interest in the study of graph theory is the first nonzero eigenvalue of the normalized Laplacian matrix. This single quantity has ties to connectivity, the rate of convergence of a random walk over the graph, the diameter, discrepancy bounds on the number of edges between sets, and many other important properties. In addition, the first nonzero eigenvalue is used to classify expander graphs, applications for which have been found in many facets of computer science, group theory, geometry, topology, and other areas. There are many surveys available on the properties of expanders and the first eigenvalue, such as \cite{hoory2006expander, lubotzky2010discrete, lubotzky2012expander}.

Recently, work has begun on generalizing the notion of the first nonzero eigenvalue of the normalized Laplacian matrix in geometric terms \cite{mendel2014expanders, mendel2013nonlinear, mendel2010towards, dumitriu2014nonlinear}. This is related to the study of the distortion of embeddings between metric spaces, and this perspective has also appeared in the literature; see, for example \cite{lee2007nonlinear, linial1995geometry, matouvsek1997embedding}. We here build upon this literature by studying this embedding constant in a general setting.

To begin, let us examine the desired generalization. We start with the standard definition of the normalized Laplacian. Any notation not explicitly defined will be given in Section \ref{S:notation} below.

 For a given graph $G$, we define the adjacency matrix $A$ to be the $\{0,1\}$-valued matrix indexed by $V(G)$ such that $A_{uv} = 1$ if $u\sim v$ and $0$ otherwise. Define the diagonal degree matrix $D$ to have $D_{vv}$ equal to $d_v$. The normalized Laplacian matrix is defined to be $\L = I-D^{-1/2}AD^{-1/2}$, where we take the convention that if $D_{vv}=0$, then $D^{-1/2}_{vv}=0$. It is well-known that the smallest eigenvalue of $\L$ is $\lambda_0=0$, with corresponding eigenvector $D^{1/2}\one$. Hence, by the Courant-Fischer theorem, we have that $\lambda_1=\inf_{f\perp D^{1/2}\one}( f^T\L f)/(f^Tf)$.

One can view the vector $f$ as a function from $V(G)$ to $\R$, where $f(v)=f_v$. From this perspective, some basic manipulations provide the following equivalent form for $\lambda_1$ (see, for example, \cite{chung1997spectral}):
\begin{equation}\label{E:defstandard} \lambda_1 = \inf_{f:V(G)\to \R} \frac{\vol{G}\sum_{u\sim v} |f(u)-f(v)|^2}{\sum_{u, v} |f(u)-f(v)|^2d_ud_v}.\end{equation}
Hence, one can view $\lambda_1$ as an attempt to compare the average distance between the embedding values at adjacent vertices to the average distance between the embedding values of an arbitrary pair of vertices. Roughly speaking, a small value of $\lambda_1$ indicates that adjacent vertices can be mapped quite close together, even as the vertices themselves are spread out. Intuitively (and actually) this would indicate poor connectivity of $G$, with the extreme case that $\lambda_1=0$ indicating that the graph is in fact disconnected.

In \cite{mendel2010towards}, the following geometrically based generalization was proposed. In Equation (\ref{E:defstandard}), one can view the quantity $|f(u)-f(v)|^2$ in terms of the distance between $f(u)$, and $f(v)$; that is, $|f(u)-f(v)|^2 = \ell_2(f(u), f(v))^2$. Hence, we can extend this definition to an arbitrary metric space $(X, d)$ by replacing $\ell_2(f(u), f(v))^2$ by $d(f(u), f(v))^2$, and taking the infimum over all functions from $V(G)$ to $X$. Specifically, we define
\begin{equation}\label{E:defgeo}
\lambda(G, X) = \inf_{f:V(G)\to X} \frac{\vol{G}\sum_{u\sim v} d(f(u),f(v))^2}{\sum_{u, v}d(f(u),f(v))^2d_ud_v}.
\end{equation}
Previous work on this constant has primarily been focused on regular graphs, and more specifically random regular graphs, and the ties between $\lambda(G, X)$ and expansion in a graph \cite{mendel2014expanders, mendel2013nonlinear, mendel2010towards, dumitriu2014nonlinear}. We here provide bounds on $\lambda(G, X)$ in the case that the metric space $X$ is itself a graph under the standard distance metric, and provide analogs to some classical theorems in spectral graph theory in this case. Specifically, we prove the following analogs to standard results in spectral graph theory.

\begin{thm}\label{T:mainone}
Let $G$ be a graph on $n$ vertices and $H$ be a graph. Then 
\begin{itemize}
\item $\lambda(G, H)=0$ if and only if $G$ is disconnected.
\item $\lambda(G, H)\leq \frac{n}{n-1}$, and equality is achieved if and only if $G=K_n$.
\end{itemize}
\end{thm}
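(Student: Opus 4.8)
The plan is to prove both bullet points by exhibiting explicit embeddings of $V(G)$ into a single edge of $H$. Since the standard distance metric on a graph is finite only when the graph is connected, one may assume $H$ is connected with at least two vertices; fix an edge $\{a,b\}$ of $H$, so $d(a,b)=1$ and $d(a,a)=d(b,b)=0$. I will also assume $G$ has no isolated vertices (otherwise both statements hold with $n$ replaced by the number of positive-degree vertices). The key tool: for $S\subseteq V(G)$ let $f_S\colon V(G)\to H$ send $S$ to $b$ and $V(G)\setminus S$ to $a$; then $d(f_S(u),f_S(v))^2$ is $1$ if exactly one of $u,v$ lies in $S$ and $0$ otherwise. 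Writing $\vol{S}=\sum_{v\in S}d_v$, the numerator in \eqref{E:defgeo} at $f=f_S$ reduces to $\vol{G}$ times $e(S)$, the number of edges of $G$ with exactly one endpoint in $S$, and the denominator reduces to $\vol{S}\,\vol{V(G)\setminus S}$. Every ratio I need below is a one-line computation from these two formulas.

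For the first bullet, if $G$ is disconnected take $S$ to be the vertex set of one connected component, so $e(S)=0$ while $\vol{S}\,\vol{V(G)\setminus S}>0$ (no isolated vertices); then the quotient at $f_S$ is $0$, and since \eqref{E:defgeo} is nonnegative, $\lambda(G,H)=0$. Conversely, for fixed $G,H$ each $d(f(u),f(v))$ lies in the finite set $\{0,1,\dots,\diam{H}\}$, so the quotient in \eqref{E:defgeo} takes only finitely many values over embeddings with positive denominator and its infimum is attained, say at $f^\ast$. If $\lambda(G,H)=0$ then $f^\ast$ makes the numerator vanish, hence is constant on each connected component of $G$; positivity of the denominator forces two positive-degree vertices where $f^\ast$ differs, necessarily in distinct components, so $G$ is disconnected.

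For the second bullet, apply the construction with $S=\{w\}$ for $w$ of minimum degree $\delta$: then $e(S)=\delta$ and the quotient equals $\vol{G}/(\vol{G}-\delta)$, which is at most $n\delta/(n\delta-\delta)=n/(n-1)$ since $\vol{G}\ge n\delta$; this gives $\lambda(G,H)\le n/(n-1)$. For equality, first note that if $G=K_n$ then every pair of distinct vertices is an edge, so for any non-constant $f$ the numerator and denominator of \eqref{E:defgeo} run over the same family of pairs and differ by the constant factor $\vol{K_n}/(n-1)^2=n/(n-1)$; thus $\lambda(K_n,H)=n/(n-1)$. Conversely suppose $G\ne K_n$. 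If $G$ is not regular then $\vol{G}>n\delta$ strictly, so the embedding $f_{\{w\}}$ already gives $\lambda(G,H)<n/(n-1)$. If $G$ is $d$-regular then $1\le d\le n-2$, so $G$ has an edge $\{x,y\}$; taking $S=\{x,y\}$ one computes $e(S)=2d-2$, $\vol{S}=2d$, $\vol{V(G)\setminus S}=nd-2d$, and the quotient equals $n(d-1)/(d(n-2))$, which is strictly less than $n/(n-1)$ exactly because $d<n-1$. Hence $G\ne K_n$ implies $\lambda(G,H)<n/(n-1)$.

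The hard part is the equality case for \emph{regular} graphs: the minimum-degree witness $S=\{w\}$ yields the value $n/(n-1)$ for every regular $G$ and so cannot separate $K_n$ from other regular graphs, so one must think of the strictly better witness $S=\{x,y\}$ spanning an edge and check that $n(d-1)/(d(n-2))<n/(n-1)$ collapses precisely to $d<n-1$. The other ingredients — the cut/volume form of the Rayleigh quotient, the disconnection argument, and the finiteness of the set of attainable values — are routine.
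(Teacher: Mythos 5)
Your proof is correct, and for three of the four sub-claims it uses exactly the paper's witnesses: the component indicator for the disconnected direction, the single minimum-degree vertex $w$ for the upper bound $\vol{G}/(\vol{G}-\delta)\le n/(n-1)$ (the paper phrases this as a contradiction, you phrase it directly --- same computation), and the adjacent pair $\{x,y\}$ to rule out regular non-complete graphs via $n(d-1)/(d(n-2))<n/(n-1)\iff d<n-1$, which is precisely the paper's second test function in its equality argument. The one place you genuinely diverge is the direction ``$G$ connected $\Rightarrow\lambda(G,H)>0$.'' The paper proves a uniform quantitative bound $R_f\ge \vol{G}/\bigl(\Delta^2(1+S_G)\bigr)$ for every nonconstant $f$, by using the triangle inequality along paths in $G$ to bound every non-adjacent distance $d(f(u),f(v))$ by $D_G\cdot\max_{x\sim y}d(f(x),f(y))$; this works for an arbitrary metric space $X$ and the explicit constant is reused later (in the corollary following Theorem \ref{T:lowerbound} and in Theorem \ref{T:twographs}). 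You instead observe that for finite $H$ the quotient takes only finitely many values, so the infimum is attained, and a minimizer with value $0$ must be constant on components yet nonconstant on positive-degree vertices, forcing disconnectedness. Your route is shorter and more elementary, but it yields no explicit lower bound and is tied to the finiteness of $H$, whereas the paper's argument is quantitative and metric-space-general. Two minor points in your favor: you are right that isolated vertices need to be excluded (the paper's ``clearly $R_f=0$'' silently assumes both sides of the cut have positive volume), and your cut/volume identities $\vol{G}\,e(S)$ and $\vol{S}\vol{V(G)\setminus S}$ for $f_S$ are a clean packaging of all the computations.
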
 We also show that for a given graph family, if $\lambda\to 0$, we must have that $\lambda \in \bigOh{\frac{1}{n^2}}$, and show by example that this bound is asymptotically tight. We also provide some general bounds on the constant $\lambda(G, H)$ as a part of the proof of Theorem \ref{T:mainone}.

In \cite{mendel2010towards}, it is noted that for any metric space, $\lambda(G, X)\lesssim \sqrt{\lambda(G, \R)}$. We prove here a lower bound for $\lambda(G, X)$ in terms of $\lambda(G, \R)$ when $X$ is finite, namely the following.

\begin{thm}\label{T:relatetoR}
Let $X$ be a finite metric space. Then there exists an absolute constant $C$ such that for every connected graph $G$, 
\[\frac{C}{\log^2|X|}\lambda(G, \R)\leq \lambda(G, X).\]
\end{thm}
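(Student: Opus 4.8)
The plan is to take a minimizing embedding of $G$ into $X$, push it forward through a low-distortion embedding of $X$ into $\ell_2$, and then use the coordinatewise structure of $\ell_2$ to return to $\R$. The single nontrivial ingredient is Bourgain's embedding theorem (see, e.g., \cite{linial1995geometry,matouvsek1997embedding}): there is an absolute constant $C_0$ such that every finite metric space $(X,d)$ admits a map $\phi\colon X\to\ell_2$ with
\[d(x,y)\le\|\phi(x)-\phi(y)\|_2\le C_0(\log|X|)\,d(x,y)\qquad\text{for all }x,y\in X,\]
after rescaling $\phi$ so that the lower bound has constant $1$. I would also first record the elementary fact that $\lambda(G,\ell_2)=\lambda(G,\R)$: writing an arbitrary $h\colon V(G)\to\ell_2$ in coordinates as $h=(h^{(i)})_i$ (the image is finite, hence lies in a finite-dimensional subspace), both the numerator and the denominator of its defining quotient (\ref{E:defgeo}) split as a sum over $i$ of the corresponding scalar quantities for $h^{(i)}$; each coordinate ratio is at least $\lambda(G,\R)$ (for a connected $G$ on $n\ge 2$ vertices, a coordinate whose denominator vanishes has vanishing numerator as well), and hence so is the ratio of the two sums. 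The reverse inequality $\lambda(G,\ell_2)\le\lambda(G,\R)$ is just the isometric inclusion $\R\hookrightarrow\ell_2$.

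With this in hand, let $f\colon V(G)\to X$ attain $\lambda(G,X)$; the minimum is attained since there are only finitely many maps $V(G)\to X$, and any minimizer has positive denominator because maps with positive denominator exist (send one vertex of $G$ to a point $x_0$ and the rest to some $x_1\ne x_0$, legitimate as $|X|\ge 2$). Set $g=\phi\circ f\colon V(G)\to\ell_2$. The distortion bounds give
\[\sum_{u\sim v}\|g(u)-g(v)\|_2^2\le (C_0\log|X|)^2\sum_{u\sim v}d(f(u),f(v))^2\]
and, using the lower bound,
\[\sum_{u,v}\|g(u)-g(v)\|_2^2\, d_ud_v\ge\sum_{u,v}d(f(u),f(v))^2\, d_ud_v>0.\]
Dividing, the quotient (\ref{E:defgeo}) of $g$ is at most $(C_0\log|X|)^2\,\lambda(G,X)$, so
\[\lambda(G,\R)=\lambda(G,\ell_2)\le(C_0\log|X|)^2\,\lambda(G,X),\]
which is exactly the asserted inequality with $C=1/C_0^2$.

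The only genuine obstacle is importing Bourgain's theorem in the normalized form above; everything else is bookkeeping and I expect the write-up to be short. The points to verify carefully are (i) that the distortion enters as its square, forced by the squared distances in (\ref{E:defgeo}); (ii) that it is the \emph{lower} distortion bound that prevents the denominator from collapsing; and (iii) the small cases: if $|X|\le 2$ then $X$ embeds isometrically into $\R$, so $\lambda(G,X)=\lambda(G,\R)$ and there is nothing to prove, while $|X|=1$ (and likewise $n=1$) is degenerate and tacitly excluded.
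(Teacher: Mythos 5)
Your proof is correct, and it takes a genuinely different --- and in fact sounder --- route than the paper's. The paper first derives from Theorem \ref{T:Bourgain} an embedding of $X$ into $\R$ itself (Corollary \ref{C:Bourgain}) by composing Bourgain's map into $\R^K$ with the coordinate-sum projection $\phi_K$, and then compares $R_g(G,X)$ with $R_{f\circ g}(G,\R)$ term by term. That projection step relies on the chain $\frac1K\|v-w\|_1\leq\|v-w\|_\infty\leq|\phi_K(v)-\phi_K(w)|$, whose second inequality fails (coordinates can cancel, e.g.\ $v-w=(1,-1)$); and indeed no embedding of a general finite metric space into the line can have polylogarithmic distortion, as the $n$-cycle already requires distortion $\Omega(n)$. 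Even taking Corollary \ref{C:Bourgain} at face value, the paper's displayed computation produces a factor $c^2/(C^2\log^6|X|)$ rather than the $c^2/(C^2\log^2|X|)$ it asserts. Your argument --- Bourgain into $\ell_2$ with distortion $O(\log|X|)$, combined with the coordinatewise mediant argument showing $\lambda(G,\ell_2)=\lambda(G,\R)$ --- sidesteps the line-embedding issue entirely, squares the distortion exactly once, and recovers the exponent $\log^2|X|$ actually claimed in the theorem; this is the standard correct proof of such statements. Two small points to tidy in the write-up: the identity $\lambda(G,\ell_2)=\lambda(G,\R)$ does need the half-line of justification you sketch (each nonconstant coordinate has ratio at least $\lambda(G,\R)$, constant coordinates contribute zero to both sums, and the quotient of sums is at least the minimum of the quotients); and for $|X|=2$ your parenthetical ``$\lambda(G,X)=\lambda(G,\R)$'' should read $\lambda(G,X)\geq\lambda(G,\R)$, since composing with the isometric inclusion only shows the infimum over two-point images dominates the infimum over all of $\R$ --- though the desired inequality holds there regardless.
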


Finally, we consider how modifications to the graphs $G$ or $H$ can impact $\lambda(G, H)$. We provide examples showing that adding an edge to $G$ can both increase and decrease the value of $\lambda$ when $H$ is held constant, so that $\lambda$ is not monotone in $G$, and provide bounds on the ratio of the two eigenvalues. Similarly, we provide examples for which taking $H'$ a subgraph of $H$ also increases and decreases the value of $\lambda$ when $G$ is hold constant, so that $\lambda$ is also not monotone in $H$. However, we do have the following theorem:

\begin{thm}\label{T:ktwoextreme}Let $H$ be a connected graph on $k>1$ vertices. Then $\lambda(G, H)\leq \lambda(G, K_2)$.
\end{thm}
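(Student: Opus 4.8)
The plan is to exploit the soft monotonicity of $\lambda(G,\cdot)$ under enlargement of the target metric space, together with the observation that every connected graph on more than one vertex contains an isometric copy of $K_2$. Concretely, since $H$ is connected and $k>1$, it has at least one edge $ab$, and then $a,b$ lie at distance $1$ in $H$, exactly as the two vertices of $K_2$ lie at distance $1$. Thus the map $\iota\colon V(K_2)\to V(H)$ sending the two vertices of $K_2$ to $a$ and $b$ is an isometry onto its image, and this is the only structural fact about $H$ that the proof will use.

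Given this, the argument is essentially a change of variables inside the infimum defining $\lambda$. First I would fix such an isometric embedding $\iota$. Then, for an arbitrary $f\colon V(G)\to V(K_2)$, I would set $g=\iota\circ f\colon V(G)\to V(H)$ and note that $d_H(g(u),g(v))=d_{K_2}(f(u),f(v))$ for every pair $u,v\in V(G)$, since $\iota$ is an isometry. Consequently the quotient appearing in (\ref{E:defgeo}) takes exactly the same value for $g$ (as a map into $H$) as it does for $f$ (as a map into $K_2$). Since $g$ is one particular map into $V(H)$, its quotient is at least $\lambda(G,H)$; taking the infimum over all $f\colon V(G)\to V(K_2)$ then yields $\lambda(G,H)\leq\lambda(G,K_2)$.

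The only points needing care are degenerate cases. If $G$ is disconnected, then by Theorem \ref{T:mainone} both $\lambda(G,H)$ and $\lambda(G,K_2)$ equal $0$ and there is nothing to prove; and if $G$ has no edges, both quotients are identically $0$ as well. So one may assume $G$ is connected with at least one edge, in which case the denominator in (\ref{E:defgeo}) is positive for every non-constant $f$ and the argument above goes through verbatim. I do not anticipate a genuine obstacle: the content of the theorem is precisely the principle that $\lambda(G,\cdot)$ does not increase under isometric embedding of the target, and the fact that $K_2$ embeds isometrically into every connected graph on more than one vertex makes it the extremal, hardest target among such graphs. If desired, this can be recorded as a strengthening — $\lambda(G,X)\leq\lambda(G,K_2)$ for \emph{every} metric space $X$ with $|X|\geq 2$ — by combining the same reasoning with the scale invariance of $\lambda(G,\cdot)$, which lets one rescale any two chosen points of $X$ to be at distance $1$.
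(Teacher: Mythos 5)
Your argument is correct and is essentially the paper's: the paper derives this as Corollary \ref{C:Ktwo} of Theorem \ref{T:subgraphsofH} (with $H'=K_2$ and $D_{H'}=1$), which in that special case reduces exactly to your observation that an edge of $H$ is an isometric copy of $K_2$, so restricting the infimum to maps landing on that edge gives $\lambda(G,H)\leq\lambda(G,K_2)$. Your closing remark that the same reasoning extends to any metric space $X$ with $|X|\geq 2$ via scale invariance is a valid (and slightly more general) observation not made in the paper.
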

Hence, the single edge provides an extreme case for calculating $\lambda$.

To avoid confusion, throughout the remainder of this work, we shall refer to the classical first eigenvalue $\lambda_1$ as $\lambda(G, \R)$. We also take any graphs used as a metric space as connected, as otherwise the ratio $\frac{\vol{G}\sum_{u\sim v} d(f(u),f(v))^2}{\sum_{u, v}d(f(u),f(v))^2d_ud_v}$ may be undefined. Since $\lambda(G,H)$ for a disconnected graph $H$ is equal to the minimum of $\lambda(G,H_j)$ for connected components $H_j$ of $H$, it suffices to assume $H$ is connected.

\section{Notation}\label{S:notation}

Throughout, we shall use standard graph theoretic notation, as follows. 

For $G$ a graph, let $V(G)$ denote the vertex set of $G$, and $E(G)\subset {V(G)\choose 2}$ denote the edge set of $G$. We write $|V(G)|=n$, $|E(G)|=m$. For a vertex $v\in V(G)$, let $d_v$ denote the degree of $v$ in $G$. If needed, for clarification we will use $d_v(G)$ to denote the degree in $G$. The maximum degree in $G$ will be denoted by $\Delta$, and the minimum degree by $\delta$. The distance between two vertices $d_G(u, v)$, is the number of edges in a shortest path between $u$ and $v$. The diameter of $G$ is the maximum distance between two vertices, and will be denoted by $D_G$. For a collection $S$ of vertices in $G$, write $\vol{S} = \sum_{u\in S} d_u$. For simplicity, we write $\vol{G}$ to denote $\vol{V(G)}$. For two sets of vertices $S,T\subset V(G)$, let $e(S, T)$ denote the number of edges incident to both $S$ and $T$. 

Throughout we will view graphs also as metric spaces, using the distance function defined above. More specifically, we will consider the quantity $\lambda(G, H)$, where $(H, d_H)$ is a metric space over a graph $H$.  We shall typically write $|V(H)|=k$.  As there are two graphs involved, for clarity we shall typically use letters $u, v$ to indicate vertices in $V(G)$ and $i, j$ to indicate vertices in $V(H)$.

The complete graph $G=K_n$ is the graph with edge set $E(G)={V(G)\choose 2}$, that is, all possible pairs of vertices are an edge in $K_n$. The complete bipartite graph $G=K_{n_1, n_2}$ has vertex set $V(G)=V_1\cup V_2$, where $|V_1|=n_1$, $|V_2|=n_2$, and $\{u, v\}\in E(G)$ if and only if one of $u, v$ is a member of $V_1$ and the other is a member of $V_2$. Given a graph $G$, we define the density of $G$ to be $\rho = \frac{m}{{n\choose 2}}$; that is, $\rho$ is the proportion of possible edges that are present in $G$.

To compute $\lambda(G, X)$, one must minimize the fraction given in Equation (\ref{E:defgeo}). For a given function $f:V(G)\to X$, define
\begin{equation}\label{E:Rf}
R_f(G, X) = \frac{\vol{G}\sum_{u\sim v} d(f(u),f(v))^2}{\sum_{u, v}d(f(u),f(v))^2d_ud_v},
\end{equation}
so that $\lambda(G, X)=\inf_{f:V(G)\to X} R_f(G, X)$. When the metric space and graph are understood, we write $R_f$ in place of $R_f(G, X)$, for simplicity. 

As the embedding constant $\lambda(G, X)$ is related to metric embeddings, we shall make use of Bourgain's Embedding Theorem \cite{bourgain1985lipschitz} to prove Theorem \ref{T:relatetoR}. Although this theorem takes many forms, the specific version we shall use is as follows (see, for example, \cite{Magen2006}).

\begin{thm}\label{T:Bourgain}
There exist constants $c, C$ such that, for all finite metric spaces $X$, there exists a function $g:X\to \R^K$, where $K = \bigTheta{\log^2|X|}$ such that, for all $x, y\in X$,
\[ (c\log |X|) d_X(x, y)\leq \|g(x)-g(y)\|_1\leq (C\log^2|X| )d_X(x, y).\]
\end{thm}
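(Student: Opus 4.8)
The plan is to establish Theorem \ref{T:Bourgain} via the classical Fr\'echet-type embedding into $\ell_1$ that records distances to random subsets sampled at a geometric sequence of densities. Fix a finite metric space $X$, write $n=|X|$, and set $T=\floor{\log_2 n}$. For each scale $t\in\{1,\dots,T\}$ and each of $m=\bigTheta{\log n}$ independent trials I would draw a random subset $A\subseteq X$ by including each point independently with probability $2^{-t}$. Enumerating all of these sets as $A_1,\dots,A_K$ with $K=Tm=\bigTheta{\log^2 n}$, the embedding is $g(x)=(d_X(x,A_1),\dots,d_X(x,A_K))$, where $d_X(x,A)=\min_{a\in A}d_X(x,a)$. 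This already produces $g:X\to\R^K$ with $K=\bigTheta{\log^2 n}$, as required.

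The upper (Lipschitz) bound is the routine half. For any subset $A$ the map $x\mapsto d_X(x,A)$ is $1$-Lipschitz by the triangle inequality, so $|d_X(x,A)-d_X(y,A)|\le d_X(x,y)$ in every coordinate. Summing over all $K$ coordinates gives $\|g(x)-g(y)\|_1\le K\,d_X(x,y)=\bigOh{\log^2 n}\,d_X(x,y)$, which is the claimed right-hand inequality.

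The contraction (lower) bound is the crux. Fix $x,y$ and write $\Delta=d_X(x,y)$. For $t\ge 0$ let $\rho_t$ be the least radius $r$ with both $|B(x,r)|\ge 2^t$ and $|B(y,r)|\ge 2^t$, set $\rho_0=0$, and truncate each $\rho_t$ at $\Delta/2$; this makes the differences $\rho_t-\rho_{t-1}$ telescope to $\Delta/2$ while keeping the relevant balls disjoint. The key probabilistic estimate is that a single random set $A$ at density $2^{-t}$ \emph{separates} $x$ and $y$ at scale $t$ --- it meets the inner ball of radius $\rho_{t-1}$ around one of $x,y$ and misses the open outer ball of radius $\rho_t$ (truncated at $\Delta/2$) around the other --- with probability bounded below by an absolute constant $p_0$, and in that event the corresponding coordinate satisfies $|d_X(x,A)-d_X(y,A)|\ge\rho_t-\rho_{t-1}$. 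The two constituent events each occur with constant probability because the inner ball has at least $2^{t-1}$ points (hence is hit with constant probability at density $2^{-t}$) while the open outer ball has fewer than $2^t$ points (hence is missed with constant probability); they are independent because the truncation at $\Delta/2$ forces the two point sets to be disjoint.

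Finally I would turn expectation into a deterministic guarantee valid simultaneously for all pairs. Rather than applying a single Hoeffding bound to $\|g(x)-g(y)\|_1$ --- which would force $m=\bigTheta{\log^2 n}$ and inflate the dimension to $\log^3 n$ --- I would concentrate scale by scale: at each scale the number of separating trials among the $m$ independent sets is binomial with mean at least $p_0 m$, so a Chernoff bound shows that at least $p_0 m/2$ of them separate except with probability $e^{-\Omega(m)}$. Choosing $m=\bigTheta{\log n}$ drives this failure probability below $n^{-2}(\log n)^{-1}$, and a union bound over all $\binom{n}{2}$ pairs and all $T$ scales yields a single embedding $g$ for which each scale contributes at least $\tfrac{p_0 m}{2}(\rho_t-\rho_{t-1})$ to $\|g(x)-g(y)\|_1$. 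Telescoping then gives $\|g(x)-g(y)\|_1\ge\tfrac{p_0 m}{4}\Delta=\Omega(\log n)\,d_X(x,y)$, the left-hand inequality. I expect the main obstacle to be the contraction estimate: verifying the constant-probability separation uniformly across scales, handling the truncation near radius $\Delta/2$ (which is exactly what secures both disjointness and the telescoping sum), and arranging per-scale rather than global concentration so that the dimension stays at $\bigTheta{\log^2 n}$ instead of degrading to $\log^3 n$.
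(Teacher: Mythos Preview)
The paper does not prove Theorem~\ref{T:Bourgain}; it is quoted as Bourgain's embedding theorem (with citations to \cite{bourgain1985lipschitz} and \cite{Magen2006}) and used only as a black box to obtain Corollary~\ref{C:Bourgain}. Your sketch is a faithful outline of the standard proof one finds in those references: the Fr\'echet-type embedding $x\mapsto (d_X(x,A_i))_i$ with random sets at geometrically decreasing densities, the trivial $1$-Lipschitz upper bound coordinatewise, and the scale-by-scale telescoping argument for the lower bound are exactly Bourgain's ideas, and the per-scale Chernoff step you describe to keep $K=\bigTheta{\log^2 n}$ rather than $\log^3 n$ is the usual refinement.

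One detail to watch when you write it out in full: with $\rho_t$ defined as the least radius at which \emph{both} balls $B(x,r)$ and $B(y,r)$ reach size $2^t$, only one of the two open balls of radius just below $\rho_t$ is guaranteed to have fewer than $2^t$ points, so at each scale you must choose which of $x,y$ gets the ``inner'' (hit) ball and which the ``outer'' (missed) ball, and that choice may flip with $t$. This is routine bookkeeping, not a gap in the approach.
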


We note that the constants $c, C$ are independent of the metric space $X$. Let $\phi_K:\R^K\to \R$ be the projection $\phi_K:\R^K\to\R$ with $\phi_K(v)=\sum_{i=1}^K v_i$, and note that for any vectors $v, w\in \R^K$, we have
\[\frac{1}{K}\|v-w\|_1\leq \|v-w\|_\infty\leq |\phi_K(v)-\phi_K(w)|\leq \|v-w\|_1.\] We then have the following immediate corollary to Bourgain's Embedding Theorem:

\begin{cor}\label{C:Bourgain}There exist absolute constants $c, C$ such that, for all finite metric spaces $X$, there exists a function $f:X\to \R$ such that, for all $x, y\in X$,
\[(c/\log |X|) d_X(x, y)\leq |f(x)-f(y)|\leq (C\log^2|X| )d_X(x, y).\]
\end{cor}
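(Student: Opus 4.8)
The plan is to take Bourgain's embedding into $\R^K$ and collapse it to a single real coordinate via the linear projection $\phi_K$, then read off both inequalities from the stated norm comparison. First I would invoke Theorem \ref{T:Bourgain} to obtain, for the given finite metric space $X$, a map $g:X\to\R^K$ with $K=\bigTheta{\log^2|X|}$ satisfying $(c\log|X|)d_X(x,y)\le\|g(x)-g(y)\|_1\le(C\log^2|X|)d_X(x,y)$ for all $x,y\in X$. I would then set $f=\phi_K\circ g$, so that $f:X\to\R$ is given by $f(x)=\sum_{i=1}^K g(x)_i$, and verify the two sides separately.

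The upper bound is immediate: for all $x,y$, the last inequality in the stated chain gives $|f(x)-f(y)|=|\phi_K(g(x))-\phi_K(g(y))|\le\|g(x)-g(y)\|_1\le(C\log^2|X|)d_X(x,y)$, which is exactly the desired right-hand estimate with the same constant $C$. For the lower bound I would chain the remaining two inequalities with the lower Bourgain estimate: applying $\|g(x)-g(y)\|_\infty\le|\phi_K(g(x))-\phi_K(g(y))|$ and then $\frac{1}{K}\|g(x)-g(y)\|_1\le\|g(x)-g(y)\|_\infty$ yields $|f(x)-f(y)|\ge\frac{1}{K}\|g(x)-g(y)\|_1\ge\frac{c\log|X|}{K}d_X(x,y)$. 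Substituting $K=\bigTheta{\log^2|X|}$ turns $\frac{c\log|X|}{K}$ into a quantity of order $\frac{1}{\log|X|}$, so after absorbing the implied constant into $c$ we obtain $|f(x)-f(y)|\ge(c/\log|X|)d_X(x,y)$. Since the constants $c,C$ of Theorem \ref{T:Bourgain} are absolute and $K$ depends on $X$ only through $|X|$, the resulting constants are absolute as well.

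The one genuinely delicate point is the lower bound, where passing from the $K$-dimensional embedding down to a single real value must not cause distances to collapse; this is precisely the role of the inequality $\|v-w\|_\infty\le|\phi_K(v)-\phi_K(w)|$ in the stated chain, and it is also the source of the extra $\frac{1}{K}=\bigTheta{1/\log^2|X|}$ factor that degrades the lower distortion coefficient from order $\log|X|$ down to order $1/\log|X|$. Everything else is bookkeeping with absolute constants, so I expect no further obstacle once the projection step is in hand.
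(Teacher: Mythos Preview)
Your argument mirrors the paper's intended route exactly: compose Bourgain's map $g$ with the coordinate-sum projection $\phi_K$ and appeal to the displayed chain of norm inequalities. The upper bound goes through as you say. The problem is in the lower bound, and the gap is genuine (and shared with the paper's sketch): the inequality $\|v-w\|_\infty\le|\phi_K(v)-\phi_K(w)|$ is false. Take $v=(1,0)$ and $w=(0,1)$ in $\R^2$; then $\|v-w\|_\infty=1$ while $\phi_2(v)-\phi_2(w)=0$. More generally, summing coordinates permits cancellation, so $|\phi_K(v)-\phi_K(w)|$ can vanish even when $v\neq w$. Thus the step you correctly flag as ``the one genuinely delicate point'' does not hold, and without it the lower-distortion estimate collapses entirely.

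Worse, the corollary as stated cannot be rescued by a cleverer choice of $f$: no map from a general finite metric space into $\R$ has distortion polylogarithmic in $|X|$. For the star with center $c$ and $n$ leaves at unit distance (pairwise leaf distance $2$), any $f$ satisfying the claimed upper bound places all leaves in an interval of length $2C\log^2(n{+}1)$ about $f(c)$, so by pigeonhole two leaves lie within $O\bigl((\log^2 n)/n\bigr)$ of each other, violating the claimed lower bound once $n$ is large. So while your write-up faithfully executes what the paper sketches, both rest on an incorrect inequality and an unattainable conclusion; the application to Theorem~\ref{T:relatetoR} should instead proceed via Bourgain into $\R^K$ together with the identity $\lambda(G,\R^K)=\lambda(G,\R)$, avoiding any one-dimensional embedding.
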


\section{Bounds on $R_f$}

One useful tool to provide simplistic bounds on $\lambda(G, H)$ will be to bound $R_f$ simultaneously for all $f$. We present here some basic bounds that shall appear throughout the remainder of this work. We begin with the following optimization that will be useful in bounding the denominator of $R_f$.

\begin{lemma}\label{L:opt}
Let $x\in\mathbb{R}^k$, with $k\geq 2$, be a vector satisfying:\begin{enumerate}\item $\sum_ix_i=C\geq 6$. \item For all $i$, $x_i\geq 0$.\item  For all $i$, $x_i\in\mathbb{Z}$.\item There exist $i,j$ where $i\neq j$ such that $x_i,x_j>0$.\end{enumerate}Then, $\|x\|_2^2\leq C^2-2C+2$.
\end{lemma}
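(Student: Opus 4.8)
The plan is to maximize $\|x\|_2^2$ subject to the constraints and show the maximum is at most $C^2 - 2C + 2$. First I would observe that since $\|x\|_2^2 = \sum_i x_i^2$ and we are fixing $\sum_i x_i = C$, the sum of squares is maximized by making the entries as ``unbalanced'' as possible: concentrating mass on few coordinates increases $\sum_i x_i^2$. So intuitively the maximizer should put as much as possible into one coordinate. However, constraint (4) forbids putting everything into a single coordinate: at least two coordinates must be strictly positive. Combined with integrality (constraint (3)), this forces the optimal configuration to be $x = (C-1, 1, 0, \dots, 0)$ (up to permutation), giving $\|x\|_2^2 = (C-1)^2 + 1 = C^2 - 2C + 2$, which is exactly the claimed bound.

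To make this rigorous, I would argue by a smoothing/exchange argument. Suppose $x$ is a vector satisfying all four constraints. If there are two coordinates $x_i \geq x_j \geq 1$ with $x_j \geq 1$ and at least one \emph{other} positive coordinate, or more simply: as long as some coordinate other than the largest is at least $2$, or there are three positive coordinates, I can move one unit from a smaller positive coordinate to the largest coordinate. Concretely, if $x_a \geq x_b$ with $x_b \geq 1$, replacing $(x_a, x_b)$ by $(x_a + 1, x_b - 1)$ changes $\|x\|_2^2$ by $(x_a+1)^2 + (x_b-1)^2 - x_a^2 - x_b^2 = 2(x_a - x_b) + 2 > 0$ whenever $x_a \geq x_b$, so the sum of squares strictly increases. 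This operation preserves constraints (1), (2), (3); it preserves (4) provided we do not reduce the number of positive coordinates below $2$, i.e. provided we only decrement a coordinate $x_b$ when either $x_b \geq 2$ or there is a third positive coordinate. Iterating, any such $x$ can be transformed, without ever decreasing $\|x\|_2^2$, into a vector with exactly two positive coordinates, one of which equals $1$; that vector is $(C-1, 1, 0, \dots, 0)$, with squared norm $C^2 - 2C + 2$. Hence $\|x\|_2^2 \leq C^2 - 2C + 2$.

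The main technical point requiring care is the bookkeeping in the exchange argument: I must verify that from \emph{any} admissible $x$ there is a finite sequence of unit-transfers, each non-decreasing in $\|x\|_2^2$ and each preserving admissibility (in particular never violating constraint (4)), terminating at $(C-1,1,0,\dots,0)$. The delicate case is when $x$ has exactly two positive coordinates, say $(x_a, x_b)$ with $x_a \geq x_b \geq 2$: here I transfer units from $x_b$ to $x_a$ until $x_b = 1$, which is legal since $x_b \geq 2$ before each transfer guarantees two positive coordinates remain afterward. The hypotheses $C \geq 6$ and $k \geq 2$ are exactly what guarantee the target vector $(C-1, 1, 0, \dots, 0)$ is itself admissible (in particular $C - 1 \geq 1$ and there is room for two positive entries), so the argument is not vacuous. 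A cleaner alternative I would consider is to bound directly: writing $M = \max_i x_i$, we have $\sum_i x_i^2 \leq M \sum_i x_i = MC$ with equality only if all mass is on coordinates of value $M$ or $0$; since constraint (4) forces $M \leq C - 1$, this already gives $\|x\|_2^2 \leq (C-1)C = C^2 - C$, which is weaker than needed, so the refinement tracking the second coordinate is genuinely necessary — I would instead split off the largest coordinate and apply $\sum_{i \neq \text{argmax}} x_i^2 \leq \bigl(\sum_{i \neq \text{argmax}} x_i\bigr)^2 = (C - M)^2$, yielding $\|x\|_2^2 \leq M^2 + (C-M)^2$, which as a function of $M \in [1, C-1]$ is maximized at the endpoints with value $C^2 - 2C + 2$. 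This last version is probably the shortest route and avoids the iterative exchange entirely.
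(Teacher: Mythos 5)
Your argument is correct and follows essentially the same route as the paper's: a smoothing step reducing to two nonzero coordinates (the paper merges a whole coordinate at once, setting $y = x - x_3e_3 + x_3e_2$, rather than transferring one unit at a time), followed by maximizing the convex one-variable function $j^2+(C-j)^2$ at the endpoints $j=1$ or $j=C-1$. Your closing alternative, $\|x\|_2^2 \le M^2+(C-M)^2$ with $M\in[1,C-1]$, is a clean shortcut to the same final optimization; the only small quibble is that $C\ge 6$ is not what makes the extremal vector admissible ($C\ge 2$ suffices) --- that hypothesis is really only needed downstream in the paper.
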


\begin{proof}
First, if $n=2$, this becomes an optimization problem in only one variable. If we set $x_1=j$, then we need only determine
\[\displaystyle\max_{\substack{j\in\mathbb{Z}\\1\leq j\leq C-1}}\left(j^2+(C-j)^2\right).\] Basic calculus shows that the maximum occurs at the endpoints of the interval, namely, where $j=1$ or $j=C-1$, obtaining a maximum value of $(C-1)^2+1 = C^2-2C+2$, as desired.

Now, let us suppose that $x\in \R^k$ has at least three nonzero entries, say $x_1, x_2, x_3$. Define $y=x-x_3e_3+x_3e_2$. Note that $y$ is also a feasible vector for the optimization, and that $\|y\|_2^2 = \|x\|_2^2 - (x_2^2+x_3^2)+(x_2+x_3)^2=\|x\|_2^2+2x_2x_3>\|x\|_2^2$. Hence, the optimum must occur at a vector with precisely 2 nonzero entries, and we may use the above argument for the case $n=2$ to obtain the desired result.
\end{proof}

We can immediately use this result to provide the following simple lower bound on the denominator in $R_f$.

\begin{thm}\label{denomlower}
Let $G$ and $H$ be connected graphs with $\vol{G}\geq 6$, $k=|V(H)|$, and $f:V(G)\rightarrow V(H)$ be an arbitrary non-constant function. Then, $\sum_{u,v}d(f(u),f(v))^2d_ud_v\geq \vol{G}-1$.
\end{thm}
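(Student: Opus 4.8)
The plan is to reduce the bound on $\sum_{u,v} d(f(u),f(v))^2 d_u d_v$ to an application of Lemma \ref{L:opt}. First I would observe that since $H$ is connected with at least two vertices, $d(f(u),f(v)) \geq 1$ whenever $f(u) \neq f(v)$, and since $f$ is non-constant, at least two distinct values in $V(H)$ are attained. Hence I would bound below:
\[
\sum_{u,v} d(f(u),f(v))^2 d_u d_v \;\geq\; \sum_{\substack{u,v\\ f(u)\neq f(v)}} d_u d_v \;=\; \sum_{i\neq j} x_i x_j \;=\; \left(\sum_i x_i\right)^2 - \sum_i x_i^2,
\]
where for each $i \in V(H)$ I set $x_i = \sum_{u: f(u)=i} d_u = \vol{f^{-1}(i)}$, so that $\sum_i x_i = \vol{G}$. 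Here the sum over $u,v$ ranges over ordered pairs; if the paper intends unordered pairs a factor of $2$ only helps, so I would note the ordered convention gives the cleanest statement and matches the denominator of $R_f$.

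Next I would check that the vector $x = (x_i)_{i\in V(H)} \in \mathbb{R}^k$ satisfies all four hypotheses of Lemma \ref{L:opt}: the entries are nonnegative integers (degrees and sums of degrees), they sum to $C = \vol{G} \geq 6$ by assumption, and because $f$ is non-constant there are at least two indices $i \neq j$ with $x_i, x_j > 0$ (each nonempty fiber contains a vertex, which in a connected graph on $\geq 2$ vertices has positive degree). Applying the lemma gives $\sum_i x_i^2 = \|x\|_2^2 \leq C^2 - 2C + 2$, hence
\[
\sum_{u,v} d(f(u),f(v))^2 d_u d_v \;\geq\; C^2 - (C^2 - 2C + 2) \;=\; 2C - 2 \;\geq\; C - 1 \;=\; \vol{G}-1,
\]
using $C \geq 6 > 1$. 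In fact this yields the slightly stronger bound $2\vol{G}-2$; I would state the theorem's weaker form as in the excerpt but could remark on the improvement.

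I do not anticipate a serious obstacle here — the argument is essentially bookkeeping to verify the hypotheses of Lemma \ref{L:opt} and the elementary identity $\sum_{i\neq j} x_i x_j = (\sum_i x_i)^2 - \sum_i x_i^2$. The one point requiring a little care is the edge case where $H$ is connected on exactly $k$ vertices but some fiber $f^{-1}(i)$ is empty, contributing $x_i = 0$; this is harmless since Lemma \ref{L:opt} only requires the nonzero entries to be at least two in number, not all entries positive. The other mild subtlety is confirming that every vertex of a connected graph on at least two vertices has degree $\geq 1$, so that each nonempty fiber genuinely forces the corresponding $x_i$ to be strictly positive — this is immediate but worth stating explicitly since the whole reduction hinges on hypothesis (4) of the lemma.
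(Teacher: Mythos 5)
Your proposal is correct and follows essentially the same route as the paper: set $x_i=\vol{f^{-1}(i)}$, use $d(i,j)\geq 1$ for $i\neq j$ to reduce to $\sum_{i\neq j}x_ix_j$, and apply Lemma \ref{L:opt} via the identity relating this to $\|x\|_1^2-\|x\|_2^2$. The only difference is the ordered-versus-unordered pair convention (the paper's factor of $\tfrac12$ yields exactly $\vol{G}-1$), which you correctly note does not affect the validity of the stated bound.
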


\begin{proof}
For all $i\in V(H)$, let $S_i = f^{-1}(i) \subset V(G)$. Let $x_i=\vol{S_i}$, and let $x=(x_1, x_2, \dots, x_k)\in \R^k$. As $f$ is a nonconstant function, we have that $x$ is a feasible vector for the optimization problem in Lemma \ref{L:opt} with $C=\vol{G}$, and thus $\|x\|_2^2\leq \vol{G}^2-2\vol{G}+2$. Note that for any $i, j\in V(H)$, we have $\sum_{u\mapsto i, v\mapsto j} d(f(u), f(v))d_ud_v=d(i, j)\vol{S_i}\vol{S_j}$. As $d(i, j)\geq 1$ for all $i\neq j$, we obtain

\begin{eqnarray*}
\sum_{u,v}d(f(u),f(v))^2d_ud_v & = & \sum_{i\neq j} d(i, j)^2\vol{S_i}\vol{S_j}\\
& \geq & \sum_{i\neq j} x_ix_j\\
& = & \left(\frac{1}{2}\left(\sum_ix_i\right)^2-\frac{1}{2}\left(\sum_ix_i^2\right)\right)\\
& = & \frac{\|x\|_1^2-\|x\|_2^2}{2}\\
& \geq & \frac{\vol{G}^2-(\vol{G}^2-2\vol{G}+2)}{2} = \vol{G}-1.
\end{eqnarray*}\end{proof}

Similarly, as for all $x\in \R^k$ we have $\|x\|_2\geq\frac{1}{\sqrt{k}}\|x\|_1$, we have the following simple upper bound on the denominator in $R_f$.

\begin{thm}\label{denomupper}
Let $G$ and $H$ be arbitrary connected graphs, with $k=|V(H)|$, and $f:V(G)\rightarrow V(H)$ be an arbitrary non-constant function. Then, $\sum_{u,v}d(f(u),f(v))^2d_ud_v\leq \frac{\vol{G}^2\diam{H}^2}{2}\left(1-\frac{1}{|V(H)|}\right)$.
\end{thm}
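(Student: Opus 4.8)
The plan is to mirror the computation from the proof of Theorem~\ref{denomlower}, but to bound the quadratic form from above rather than from below. As there, for each $i\in V(H)$ set $S_i=f^{-1}(i)\subseteq V(G)$, let $x_i=\vol{S_i}$, and form $x=(x_1,\dots,x_k)\in\R^k$. The same manipulation gives
\[\sum_{u,v}d(f(u),f(v))^2d_ud_v=\sum_{i\neq j}d(i,j)^2\vol{S_i}\vol{S_j}=\sum_{i\neq j}d(i,j)^2x_ix_j,\]
and $\sum_i x_i=\|x\|_1=\vol{G}$ as before.

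Next I would use connectedness of $H$: every pair $i,j\in V(H)$ satisfies $d(i,j)\leq\diam{H}$, so the sum above is at most $\diam{H}^2\sum_{i\neq j}x_ix_j=\diam{H}^2\cdot\frac{1}{2}\left(\|x\|_1^2-\|x\|_2^2\right)$. Finally, the key inequality is the one recorded immediately before the statement, $\|x\|_2\geq\frac{1}{\sqrt{k}}\|x\|_1$, which is just Cauchy--Schwarz applied to the $k$-dimensional nonnegative vector $x$; it gives $\|x\|_1^2-\|x\|_2^2\leq\|x\|_1^2\left(1-\frac{1}{k}\right)=\vol{G}^2\left(1-\frac{1}{k}\right)$. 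Combining these estimates yields exactly $\sum_{u,v}d(f(u),f(v))^2d_ud_v\leq\frac{\vol{G}^2\diam{H}^2}{2}\left(1-\frac{1}{k}\right)$.

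There is no real obstacle here; the only points needing a word of care are that $d(i,j)\leq\diam{H}$ relies on $H$ being connected (so that $\diam{H}$ is finite and this is a genuine bound), and that the Cauchy--Schwarz step is valid for any nonnegative $x\in\R^k$ without knowing which coordinates vanish. The hypothesis that $f$ is non-constant is not actually used for this upper bound, but including it keeps the statement parallel to Theorem~\ref{denomlower}.
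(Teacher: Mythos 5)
Your proposal is correct and follows essentially the same route as the paper: the same decomposition into the sets $S_i=f^{-1}(i)$ with $x_i=\vol{S_i}$, the bound $d(i,j)\leq \diam{H}$, the identity $\sum_{i\neq j}x_ix_j=\tfrac{1}{2}\left(\|x\|_1^2-\|x\|_2^2\right)$, and the inequality $\|x\|_2\geq \|x\|_1/\sqrt{k}$. Your side remark that non-constancy of $f$ is not needed for this upper bound is accurate.
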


\begin{proof}
Noting that for $i\neq j\in V(H)$, we have $d(i, j)\leq \diam{H}$, and following the technique and notation in the proof of Theorem \ref{denomlower}, we obtain

\begin{eqnarray*}
\sum_{u,v}d(f(u),f(v))^2d_ud_v & \leq & \diam{H}^2\sum_{i\neq j}x_ix_j\\
& = & \diam{H}^2\left( \frac{\|x\|_1^2-\|x\|_2^2}{2}\right)\\
& \leq & \diam{H}^2\left(\frac{\vol{G}^2 -\frac{1}{k}\vol{G}^2}{2}\right)\\
& = & \frac{\vol{G}^2\diam{H}^2}{2}\left(1-\frac{1}{|V(H)|}\right).
\end{eqnarray*}
\end{proof}

\section{Bounds on $\lambda(G, H)$}

We begin by proving Theorem \ref{T:mainone}, in the following four theorems.

\begin{thm}\label{T:lowerbound}
Let $G$ be a graph and $X$ a metric space with $|X|>2$. Then $\lambda(G, X)=0$ if and only if $G$ is disconnected.
\end{thm}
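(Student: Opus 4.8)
My plan is to prove the biconditional in both directions, with the forward direction being the substantive one. For the easy direction, suppose $G$ is disconnected. Pick a connected component $C \subsetneq V(G)$ and two vertices $i \neq j$ of $X$ (possible since $|X| > 2$, indeed $|X| \geq 2$ suffices here). Define $f$ to send all of $C$ to $i$ and all of $V(G) \setminus C$ to $j$. Then every edge of $G$ lies entirely within one part, so $d(f(u), f(v)) = 0$ for all $u \sim v$, making the numerator of $R_f(G,X)$ zero; the denominator is $d(i,j)^2 \vol{C} \vol{V(G)\setminus C} > 0$, so $R_f = 0$ and hence $\lambda(G,X) = 0$.

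For the converse, suppose $G$ is connected; I want to show $\lambda(G,X) > 0$. The key observation is that the infimum defining $\lambda(G,X)$ is really over functions $f$ with image of size at most... well, it is taken over all $f : V(G) \to X$, but only non-constant $f$ give a well-defined ratio, and for such $f$ the numerator $\sum_{u \sim v} d(f(u),f(v))^2$ is strictly positive: since $G$ is connected and $f$ is non-constant, there must exist an edge $u \sim v$ with $f(u) \neq f(v)$, and then $d(f(u),f(v)) \geq 1$ (as $X$ is a metric space — or, in our setting, at least its distances are bounded below by a positive constant; for a graph metric this constant is $1$). Thus the numerator is at least $\vol{G}$. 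For the denominator, I can invoke Theorem~\ref{denomupper}, which gives an upper bound of $\frac{\vol{G}^2 \diam{H}^2}{2}(1 - \frac{1}{|X|})$ when $X$ is a graph $H$ — or more generally $\frac{\vol{G}^2 (\operatorname{diam} X)^2}{2}$ for a finite metric space — so $R_f$ is bounded below by a fixed positive quantity independent of $f$, whence $\lambda(G,X) > 0$. (If $X$ is allowed to be infinite one must argue slightly differently, bounding the denominator using that only finitely many vertices of $G$ are involved, but the statement as given should be read in the graph-metric context, so Theorem~\ref{denomupper} applies directly.)

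The main obstacle — really the only place requiring care — is making sure the denominator cannot blow up relative to the numerator. With Theorem~\ref{denomupper} available this is immediate, since that bound depends only on $\vol{G}$ and $\diam{H}$, both fixed once $G$ and $H$ are fixed; combined with the numerator lower bound $\vol{G}$ (valid whenever $\vol{G}$ is large enough, and small cases being finite and checkable directly), we get a uniform positive lower bound on $R_f$. So the proof reduces to: (i) the explicit disconnecting embedding for one direction, and (ii) the numerator-positive / denominator-bounded estimate for the other, the latter citing Theorems~\ref{denomlower} or~\ref{denomupper} as needed. I would also remark why the hypothesis $|X| > 2$ (or $|X| \geq 2$) is needed: with $|X| = 1$ the ratio is always undefined, so the statement is vacuous or false there.
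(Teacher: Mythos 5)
Your easy direction is the same as the paper's. The converse, however, has a genuine gap: the theorem is stated for an \emph{arbitrary} metric space $X$, and your argument only works when distances between distinct points of $X$ are bounded below by a positive constant and $X$ has finite diameter (e.g.\ a finite graph metric). For $X=\R$ a non-constant $f$ can have image $\{0,\eps\}$ with $\eps$ arbitrarily small, so your claim that the numerator is at least $\vol{G}$ fails, and Theorem~\ref{denomupper} --- whose proof uses both $d(i,j)\geq 1$ and $d(i,j)\leq \diam{H}$ for distinct vertices of a graph $H$ --- is simply not available. Your parenthetical fix (``only finitely many vertices of $G$ are involved'') does not repair this, because the problem is one of scale, not of finiteness: $R_f$ is invariant under rescaling the metric, so you cannot bound the numerator below and the denominator above by constants separately; you must bound their ratio.

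The paper's proof does exactly that, and the one idea your proposal is missing is the following. Set $C=\max_{u\sim v}d(f(u),f(v))$, which is positive since $G$ is connected and $f$ non-constant. Connectivity plus the triangle inequality along a shortest path give $d(f(u),f(v))\leq D_G\,C$ for \emph{every} pair $u,v$, where $D_G$ is the diameter of $G$. Hence the denominator is at most $\Delta^2\bigl(\sum_{u\sim v}d(f(u),f(v))^2+\bigl(\binom{n}{2}-m\bigr)D_G^2C^2\bigr)$ while the numerator is at least $\vol{G}\,C^2$; both sides scale like $C^2$, the $C$'s cancel, and one gets $R_f\geq \vol{G}/\bigl(\Delta^2(1+S_G)\bigr)>0$ uniformly in $f$ for any metric space whatsoever. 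Your side remarks are fine: $|X|\geq 2$ does suffice for the disconnected direction, and in the purely graph-metric setting your numerator/denominator argument is valid (it essentially reproves Theorem~\ref{naivelower}), but as a proof of the stated theorem it does not go through.
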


\begin{proof}

First, suppose that $G$ is disconnected, so exists a partition of $V$ into sets $V_1$ and $V_2$ such that $e(V_1,V_2)=0$, and $|V_1|, |V_2|>0$. Let $a,b\in X$ with $a\neq b$. Define a function $f:V\rightarrow X$ by
\[ f(v) = \left\{ \begin{array}{ll} a & v\in V_1\\ b & v\in V_2\end{array}\right. .\] 

Clearly, by definition, $R_f=0$, and hence $0\leq \lambda(G, X)\leq R_f=0$.

For the other direction, suppose that $G$ is connected, with diameter $D$. Let $f:V\to X$ be a nonconstant function, and let $C=\max_{u\sim v} d(f(u), f(v))$. Let $u, v\in V$ with $u\nmtd v$. As $G$ is connected, there exists a path $u=u_0\sim u_1\sim\dots\sim u_k\sim u_{k+1}=v$. By the triangle inequality, we have $d(f(u), f(v))\leq \sum_{i=0}^k d(f(u_i), f(u_{i+1}))\leq DC$. Thus we have

\begin{eqnarray*}
R_f & \geq & \frac{\vol{G}}{\Delta^2}\frac{\sum_{u\mtd v} d(f(u), f(v))^2}{\sum_{u\mtd v} d(f(u), f(v))^2+\sum_{u\nmtd v} d(f(u), f(v))^2}\\
 & \geq & \frac{\vol{G}}{\Delta^2}\frac{\sum_{u\mtd v} d(f(u), f(v))^2}{\sum_{u\mtd v} d(f(u), f(v))^2+\left({n\choose 2}-m\right)(DC)^2}\\
 & \geq & \frac{\vol{G}}{\Delta^2}\left(1-\frac{\left({n\choose 2}-m\right)(DC)^2}{\sum_{u\mtd v} d(f(u), f(v))^2+\left({n\choose 2}-m\right)(DC)^2}\right)
\end{eqnarray*}

Let $S_G = \left({n\choose 2}-m\right)D^2$, and note that this constant is independent of $f$. Moreover by definition, $\sum_{u\mtd v}d(f(u), f(v))^2\geq C^2$. Therefore,

\begin{eqnarray*}
\frac{\left({n\choose 2}-m\right)(DC)^2}{\sum_{u\mtd v} d(f(u), f(v))^2+\left({n\choose 2}-m\right)(DC)^2} &\leq & \frac{S_GC^2}{C^2+S_GC^2}\\
& = & \frac{S_G}{1+S_G}\leq1.
\end{eqnarray*}
Therefore, we have that for any function $f$, 

\[ R_f \geq \frac{\vol{G}}{\Delta^2}\left(1-\frac{S_G}{1+S_G}\right)>0,\]
and thus $\lambda(G, X)>0$ for any connected graph $G$.
\end{proof}

Note moreover that the proof technique yields the following immediate corollary.

\begin{cor}
If $G$ is a connected graph with diameter $D$, and $X$ is any metric space, then \[\lambda(G, X)\geq\frac{\vol{G}}{\Delta^2(1+S_G)},\] where $S_G=(1-\rho){n\choose 2}D^2$.
\end{cor}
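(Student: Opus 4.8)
The plan is to recognize that this corollary requires no new argument: it is simply the explicit reading of the final chain of inequalities already derived in the proof of Theorem \ref{T:lowerbound}. There, for an arbitrary nonconstant $f:V(G)\to X$ with $G$ connected of diameter $D$, we established
\[ R_f \geq \frac{\vol{G}}{\Delta^2}\left(1 - \frac{S_G}{1+S_G}\right),\qquad S_G = \left({n\choose 2}-m\right)D^2. \]
The feature I would emphasize is that $S_G$ depends only on $G$ (through $n$, $m$, and $D$) and not on the choice of $f$ — this uniformity is exactly what permits passing from a per-$f$ estimate to a bound on the infimum. Indeed, every step in that derivation used only the triangle inequality together with $\sum_{u\sim v} d(f(u),f(v))^2 \geq C^2$, and at no point did the quantity $S_G$ acquire a dependence on $f$.

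From there two routine simplifications finish the proof. First I would simplify the parenthetical factor, $1 - \frac{S_G}{1+S_G} = \frac{1}{1+S_G}$, so the per-$f$ bound becomes $R_f \geq \frac{\vol{G}}{\Delta^2(1+S_G)}$. Next I would rewrite $S_G$ in terms of the density $\rho = \frac{m}{{n\choose 2}}$: since ${n\choose 2}-m = (1-\rho){n\choose 2}$, we get $S_G = (1-\rho){n\choose 2}D^2$, matching the statement. Finally, because $\lambda(G,X) = \inf_f R_f$ with the infimum taken over nonconstant $f$ (constant functions make the denominator of $R_f$ vanish and are excluded), and because the lower bound holds for every such $f$ with an identical right-hand side, taking the infimum yields $\lambda(G,X) \geq \frac{\vol{G}}{\Delta^2(1+S_G)}$.

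I do not expect a genuine obstacle here; the only point that warrants a sentence of care is the uniformity claim justified above. I would also note in passing that the restriction $|X|>2$ appearing in Theorem \ref{T:lowerbound} is unnecessary for this direction: the lower-bound computation never used it, and when $|X|\leq 1$ there are no nonconstant $f$, so the inequality holds vacuously. Hence the corollary is valid for any metric space $X$, exactly as stated.
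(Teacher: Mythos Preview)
Your proposal is correct and matches the paper exactly: the paper presents this corollary with no separate proof, simply noting that ``the proof technique yields the following immediate corollary,'' and your write-up spells out precisely those two routine steps (simplifying $1 - \frac{S_G}{1+S_G} = \frac{1}{1+S_G}$ and rewriting $\binom{n}{2}-m = (1-\rho)\binom{n}{2}$) together with the observation that $S_G$ is independent of $f$.
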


We note that if $G$ is the complete graph, we obtain equality in the above bound. Indeed, if $G=K_n$, then for any function $f:V(G)\to X$, we have that $\sum_{u\mtd v} d(f(u), f(v))^2 = \sum_{u, v\in V(G)} d(f(u), f(v))^2$, and hence $R_f = n(n-1)/(n-1)^2=n/(n-1)$, regardless of the metric space into which we embed. In fact, this is the largest possible value that $\lambda(G, X)$ can take.

\begin{thm} Suppose that $G$ is a connected graph. Then, for any metric space $X$ where $|X|\geq 2$, we have $\lambda(G,X)\leq\frac{n}{n-1}$.
\end{thm}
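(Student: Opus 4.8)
The plan is to exhibit a single family of test functions $f:V(G)\to X$ whose Rayleigh quotients $R_f$ approach $\frac{n}{n-1}$, thereby showing the infimum $\lambda(G,X)$ is at most $\frac{n}{n-1}$. The natural candidate is to use only two points of $X$, say $a\neq b$, and let $f$ send a single vertex $w\in V(G)$ to $a$ and all other vertices to $b$. With this choice $d(f(u),f(v))^2$ equals $d(a,b)^2$ when exactly one of $u,v$ is $w$ and $0$ otherwise, so the constant $d(a,b)^2$ cancels from numerator and denominator of $R_f$ and we are left with a purely combinatorial ratio.

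Carrying this out, the numerator $\vol{G}\sum_{u\sim v}d(f(u),f(v))^2$ becomes $\vol{G}\cdot d_w$ (counting edges incident to $w$), and the denominator $\sum_{u,v}d(f(u),f(v))^2 d_ud_v$ becomes $d_w\left(\vol{G}-d_w\right)$ (the ordered pairs with exactly one coordinate equal to $w$, weighted by degrees). Hence $R_f = \frac{\vol{G}\,d_w}{d_w(\vol{G}-d_w)} = \frac{\vol{G}}{\vol{G}-d_w}$. To make this as small as possible we want $d_w$ as small as possible; but rather than optimizing over vertices I would instead iterate: choose $w$ arbitrarily, and observe that $\frac{\vol{G}}{\vol{G}-d_w}$ is a valid upper bound. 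The cleanest route to exactly $\frac{n}{n-1}$ is a different two-valued function — split $V(G)$ into two parts and track the resulting ratio — so let me reconsider.

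Actually, the sharpest two-valued test function is the following: pick any vertex $w$ and, more cleverly, note we want the bound $\frac{n}{n-1}$, which suggests \emph{unweighted} counting, i.e. it is the bound one gets from $G=K_n$. For general $G$ one should instead average over all single-vertex perturbations, or better, use the known fact that $\lambda(G,X)\le \lambda(G,\R)$ is false in general — so the honest approach is: for each $w$, $R_f=\frac{\vol G}{\vol G - d_w}$, and since some vertex has $d_w\le \frac{\vol G}{n}$ (average degree), we get $R_f\le \frac{\vol G}{\vol G(1-1/n)}=\frac{n}{n-1}$. That is exactly the claimed bound. So the key steps are: (1) fix $a,b\in X$ distinct, which exist since $|X|\ge 2$; (2) choose $w$ with $d_w\le \vol G/n$, which exists by averaging; (3) define $f(w)=a$, $f(v)=b$ otherwise; (4) compute numerator $=\vol G\cdot d_w\cdot d(a,b)^2$ and denominator $=d_w(\vol G-d_w)d(a,b)^2$; (5) conclude $R_f=\frac{\vol G}{\vol G-d_w}\le \frac{n}{n-1}$, hence $\lambda(G,X)\le \frac{n}{n-1}$.

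I expect no serious obstacle here — the computation is routine bilinear bookkeeping with the degree weights. The one point requiring a moment's care is the denominator count: $\sum_{u,v}d(f(u),f(v))^2d_ud_v$ sums over \emph{ordered} pairs $(u,v)$, and the pairs contributing are $(w,v)$ and $(u,w)$ for $v,u\neq w$, giving $2\sum_{v\neq w}d_wd_v = 2d_w(\vol G-d_w)$; one must then check the same ordered-versus-unordered convention is used in the numerator $\sum_{u\sim v}$ (it is, per Equation (\ref{E:defgeo})), so the factors of $2$ match and cancel. The other minor point is that $f$ is nonconstant (so $R_f$ is well-defined as a genuine ratio with nonzero denominator), which holds as long as $n\ge 2$; the case $n=1$ is degenerate and can be dismissed since then $G$ has no edges and the statement is vacuous or trivial.
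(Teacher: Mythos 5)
Your final argument is correct and is essentially the paper's proof: both send a single vertex of at most average degree (the paper uses a minimum-degree vertex $w$ with $\delta \leq \vol{G}/n$) to one point of $X$ and everything else to another, compute $R_f = \vol{G}/(\vol{G}-d_w) \leq n/(n-1)$, and conclude; the paper merely phrases the computation as a contradiction. The exploratory detours in your middle paragraph are harmless since the clean five-step version at the end is self-contained.
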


\begin{proof}
Suppose that $\lambda >\frac{n}{n-1}$. Then, for all $f:V(G)\rightarrow X$, $R_f>\frac{n}{n-1}$. Equivalently, 
\[(n-1)\vol{G}\sum_{u\mtd v}d(f(u),f(v))^2>n\sum_{u,v}d(f(u),f(v))^2d_ud_v\]
Fix an arbitrary vertex $w$ of $G$ of minimal degree $\delta$ and define $f:V(G)\rightarrow X$ as mapping every vertex except $w$ to $a\in X$ and mapping $w$ to $b\in X$, where $d_X(a,b)=\epsilon$. Plugging this function into the inequality yields:
\[(n-1)\vol{G}\delta\epsilon^2>n\sum_{v\neq w}\epsilon^2\delta d_v=n\delta \epsilon^2(\vol{G}-\delta)\]
\[\implies \vol{G}<n\delta\]
which is a contradiction. 
\end{proof}
We have seen already that the complete graphs achieve this bound. Next, we see what can be learned about $G$ from knowing that $\lambda(G,X)=\frac{n}{n-1}$. 

\begin{thm}
Suppose that for some $G$, $\lambda(G,X)=\frac{n}{n-1}$. Then, $G$ is complete.
\end{thm}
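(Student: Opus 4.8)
The plan is to prove the contrapositive: if $G$ is connected but not complete, then $\lambda(G,X) < \frac{n}{n-1}$, so that the hypothesis $\lambda(G,X) = \frac{n}{n-1}$ can only hold for $G$ complete. First note that the hypothesis already forces $G$ to be connected: if $G$ were disconnected, mapping one connected component of $G$ to a point $b\in X$ and the rest to another point $a\in X$ makes the numerator of $R_f$ vanish while the denominator does not, so $\lambda(G,X)=0\neq\frac{n}{n-1}$. Now, exactly as in the two preceding theorems, it is enough to exhibit one function $f:V(G)\to X$ with $R_f<\frac{n}{n-1}$, and I would only use two points $a,b\in X$ with $d(a,b)=\epsilon>0$. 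Such an $f$ is determined by the set $S=f^{-1}(b)$, and writing $T=V(G)\setminus S$ only the pairs and edges crossing between $S$ and $T$ contribute, which gives the clean identity
\[ R_f \;=\; \frac{\vol{G}\,e(S,T)}{\vol{S}\,\vol{T}}. \]
So the whole problem reduces to finding a nonempty proper $S\subseteq V(G)$ with $\vol{G}\,e(S,T)<\frac{n}{n-1}\vol{S}\vol{T}$; as a sanity check, for $G=K_n$ every split gives $R_f=\frac{n}{n-1}$ exactly, matching the known extremal behaviour.

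I would then split into two cases according to whether $G$ is regular. If $G$ is not regular, take $S=\{w\}$ with $w$ a vertex of minimum degree $\delta$; then $e(S,T)=\vol{S}=\delta$ and $\vol{T}=\vol{G}-\delta$, so $R_f=\frac{\vol{G}}{\vol{G}-\delta}$, and this is strictly less than $\frac{n}{n-1}$ precisely when $\vol{G}>n\delta$, which holds because an irregular graph has some vertex of degree exceeding $\delta$. This is the same single-vertex embedding already used to prove $\lambda(G,X)\le\frac{n}{n-1}$.

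The main obstacle is the regular case, since there the single-vertex embedding produces exactly $\frac{n}{n-1}$, so a genuinely different $S$ is needed; moreover, one should not pick $S=\{u,v\}$ with $u\not\sim v$, as that gives $R_f=\frac{n}{n-2}>\frac{n}{n-1}$. Instead, suppose $G$ is $d$-regular; since $G$ is connected and not complete, $n\ge 3$ and $d<n-1$, and $G$ has an edge $\{u,v\}$. Taking $S=\{u,v\}$ gives $e(S,T)=2d-2$, $\vol{S}=2d$, and $\vol{T}=(n-2)d$, so $R_f=\frac{n(d-1)}{d(n-2)}$, and a one-line computation shows $\frac{n(d-1)}{d(n-2)}<\frac{n}{n-1}$ is equivalent to $d<n-1$, which holds. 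In both cases $\lambda(G,X)\le R_f<\frac{n}{n-1}$, contradicting the hypothesis, so $G$ must be complete. The one point requiring care is to keep the ordered-versus-unordered-pair convention in the denominator of $R_f$ consistent with the single-vertex computation in the previous proof, which the $K_n$ sanity check confirms.
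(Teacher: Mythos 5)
Your proposal is correct and follows essentially the same route as the paper: the paper first uses the single minimum-degree-vertex embedding to deduce $\vol{G}\leq n\delta$ (hence regularity), then the adjacent-pair embedding to force $\delta=n-1$, which are exactly your two test functions, just organized as a case split on regularity rather than as two successive deductions. The explicit identity $R_f=\vol{G}\,e(S,T)/(\vol{S}\vol{T})$ and the remark that a non-adjacent pair would fail are nice clarifications but not a different argument.
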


\begin{proof}
The assumption means that $\inf_fR_f=\frac{n}{n-1}$. Plugging in the same function from the proof of Th. 8.1 yields:
\[\frac{\vol{G}\delta\epsilon^2}{\sum_{v\neq w}\epsilon^2\delta d_v}\geq\frac{n}{n-1}\implies \frac{\vol{G}}{\vol{G}-\delta}\geq\frac{n}{n-1}\implies \vol{G}\leq n\delta\]
But this implies that $G$ is $\delta-$regular since $\delta$ is the smallest degree. 
\\By assumption, we know that for all $f:V(G)\rightarrow X$, $\frac{n}{n-1}\leq\frac{\vol{G}\sum_{u\mtd v}d(f(u),f(v))^2}{\sum_{u,v}d(f(u),f(v))^2d_ud_v}=\frac{n\sum_{u\mtd v}d(f(u),f(v))^2}{\delta\sum_{u,v}d(f(u),f(v))^2}$. Select $p,q\in V(G)$ such that $p\mtd q$. Define $f:V(G)\rightarrow X$ by mapping all vertices in $G$ to $a\in X$ except for $p,q$, which get mapped to $b\in X$, where $d_X(a,b)=\epsilon$. Then, we get
\[\frac{n}{n-1}\leq\frac{2n(\delta-1)\epsilon^2}{2\delta (n-2)\epsilon^2}\implies\frac{\delta}{n-1}\leq\frac{\delta-1}{n-2}\implies \delta=n-1\implies G\text{ is complete.}\]
\end{proof}

We now turn to the proof of Theorem \ref{T:relatetoR}.

\begin{proof}[Proof of Theorem \ref{T:relatetoR}]
Suppose that $X$ is a finite metric space. Let $c, C$ be the constants guaranteed by Corollary \ref{C:Bourgain}, and let $f:X\to \R$ be the function guaranteed by the same corollary. Take $g:G\to X$ to be any nonconstant function. Then we obtain

\begin{eqnarray*}
R_g(G, X) & = & \frac{\vol{G}\sum_{u\mtd v}d_X(g(u), g(v))^2}{\sum_{u, v}d_X(g(u), g(v))^2d_ud_v}\\
& \geq & \frac{\vol{G}\sum_{u\mtd v} \frac{1}{C^2\log^4|X|}|f\circ g(u)-f\circ g(v)|^2}{\sum_{u, v}\frac{\log^2|X|}{c^2}|f\circ g(u)-f\circ g(v)|^2d_ud_v}\\
& = & \frac{c^2}{C^2\log^2|X|}R_{f\circ g}(G, \R)\\
& \geq & \frac{c^2}{C^2\log^2|X|}\lambda(G, \R).
\end{eqnarray*}
As this bound holds for all $g:G\to X$, taking the infimum yields the result.
\end{proof}

\subsection{Asymptotic lower bounds on $\lambda$}
Here, we investigate how quickly $\lambda$ can decrease to 0. We first prove a naive lower bound, and show that asymptotically this is best possible.

\begin{thm}\label{naivelower}
If $G$ is a connected graph on $n$ vertices, and $H$ is a graph on $k$ vertices with diameter $D_H$, then $\lambda(G,H)\geq\frac{2k}{D_H^2\vol{G}(k-1)}$.
\end{thm}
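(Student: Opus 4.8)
The plan is to lower-bound $\lambda(G,H) = \inf_f R_f(G,H)$ by bounding the numerator of $R_f$ from below and the denominator from above, uniformly over all nonconstant $f : V(G) \to V(H)$. For the denominator, I would directly invoke Theorem~\ref{denomupper}, which gives $\sum_{u,v} d(f(u),f(v))^2 d_u d_v \leq \frac{\vol{G}^2 D_H^2}{2}\left(1 - \frac{1}{k}\right) = \frac{\vol{G}^2 D_H^2 (k-1)}{2k}$. For the numerator, the key observation is that since $f$ is nonconstant and $G$ is connected, there must be at least one edge $u \sim v$ with $f(u) \neq f(v)$, whence $d(f(u),f(v)) \geq 1$ and so $\sum_{u \sim v} d(f(u),f(v))^2 \geq 1$. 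Combining these two bounds gives
\[
R_f(G,H) \;\geq\; \frac{\vol{G} \cdot 1}{\frac{\vol{G}^2 D_H^2 (k-1)}{2k}} \;=\; \frac{2k}{D_H^2 \vol{G}(k-1)},
\]
and taking the infimum over $f$ yields the claimed bound.

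**Carrying it out, step by step.**

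First I would fix an arbitrary nonconstant $f : V(G) \to V(H)$ (the infimum defining $\lambda(G,H)$ is over such functions; constant $f$ are excluded since they make $R_f$ undefined). Second, I would verify the numerator bound: because $G$ is connected and $f$ is not constant, the set of vertices on which $f$ takes one particular value cannot be all of $V(G)$ nor can $V(G)$ be partitioned with no crossing edges, so some edge $u \sim v$ of $G$ has $f(u) \neq f(v)$; then $d_H(f(u),f(v)) \geq 1$ and hence $\sum_{u \sim v} d_H(f(u),f(v))^2 \geq 1$. Third, I would apply Theorem~\ref{denomupper} to bound the denominator; note this requires only that $G$ and $H$ be connected, which is assumed. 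Fourth, I would substitute both bounds into the definition~\eqref{E:Rf} of $R_f$ and simplify. Finally, since the resulting lower bound is independent of $f$, taking $\inf_f$ gives $\lambda(G,H) \geq \frac{2k}{D_H^2 \vol{G}(k-1)}$.

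**Where the difficulty lies.**

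There is essentially no real obstacle here; the statement is called "naive" precisely because it follows by plugging the trivial numerator estimate into Theorem~\ref{denomupper}. The only point requiring a moment's care is the edge-existence argument for the numerator: one must use connectivity of $G$ to guarantee that a nonconstant labeling has at least one bichromatic edge (otherwise each fiber $f^{-1}(i)$ would be a union of components, contradicting either connectivity or nonconstancy). One should also double-check that Theorem~\ref{denomupper} does not secretly require $\vol{G} \geq 6$ the way Theorem~\ref{denomlower} does — it does not, since it only uses $\|x\|_2 \geq \frac{1}{\sqrt{k}}\|x\|_1$, so the bound is valid for all connected $G$ with no volume restriction.
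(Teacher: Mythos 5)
Your proof is correct and follows exactly the paper's argument: the numerator is bounded below by $1$ using connectivity and nonconstancy of $f$, the denominator is bounded above by Theorem~\ref{denomupper}, and the two estimates combine to give the stated bound. Your added remarks (the bichromatic-edge justification and the observation that Theorem~\ref{denomupper} needs no volume hypothesis) are accurate elaborations of details the paper leaves implicit.
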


\begin{proof} First, we note that for any function $f:V(G)\to V(H)$, with $f$ nonconstant, we must have \[\sum_{u\mtd v}d(f(u), f(v))^2\geq 1.\] Using this together with the bound found in Theorem \ref{denomupper}, we obtain that for any $f$,
\[R_f \geq \frac{2\vol{G}}{D_H^2\vol{G}^2(1-1/k)} = \frac{2k}{D_H^2\vol{G}(k-1)}.\]
\end{proof}

Note that as $\vol{G}\leq n^2$, this result implies that for any graph family $\mathcal{G}$ and fixed graph $H$, the eigenvalues of $G_n\in \mathcal{G}$ with respect to $H$ decay no more rapidly than order $1/n^2$. As the next example shows, this is the optimal order of decay.

\begin{exa}
Construction a dumbbell graph $G=G_n$ by taking two copies of $K_{n/2}$ and attaching single edge between them. Let $H$ be $K_2$. Define a function $f:V(G)\to V(H)$ by mapping the vertices in the two dumbbells to opposite vertices in $H$. Then we obtain 
\[\lambda\leq\frac{\vol{G}\sum_{u\mtd v}d(f(u),f(v))^2}{\sum_{u,v}d(f(u),f(v))^2d_ud_v}=\frac{n(n/2-1)+2}{(n/2-1)^4 + n(n/2-1)^2 + (n/2)^2}\]

On the other hand, the lower bound given by Theorem \ref{naivelower} in this case is $\frac{4}{n(n/2-1)+2}$. Note that both bounds here are order $1/n^2$, and indeed, the constant is also the same; that is, both bounds decay as $8/n^2$. Thus, the bound given in Theorem \ref{naivelower} is asymptotically best possible.
\end{exa}
Now we turn our attention to regular graphs. We first note the following naive bound for $\lambda(G, H)$ for regular graphs
\begin{thm}\label{naivelowerreg}
If $G$ is a $d$-regular graph on $n$ vertices, and $H$ is a graph on $k$ vertices with diameter $D_H$, then $\lambda(G,H)\geq\frac{2}{(n-1)dD_H^2}$.
\end{thm}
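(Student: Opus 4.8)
The plan is to repeat the strategy of Theorem \ref{naivelower}, but replace the weak bound $\vol{G}\le n^2$ by the exact value $\vol{G}=nd$ available for a $d$-regular graph. Concretely, for any nonconstant $f:V(G)\to V(H)$ we still have $\sum_{u\mtd v} d(f(u),f(v))^2\ge 1$ for the numerator, since at least one edge of $G$ joins two vertices on which $f$ takes distinct values, contributing at least $1^2$. For the denominator I would invoke Theorem \ref{denomupper} with $\vol{G}=nd$, giving
\[
\sum_{u,v} d(f(u),f(v))^2 d_u d_v \;\le\; \frac{(nd)^2 D_H^2}{2}\Bigl(1-\tfrac1k\Bigr).
\]
Dividing, $R_f \ge \dfrac{nd \cdot 1}{\frac{(nd)^2 D_H^2}{2}(1-1/k)} = \dfrac{2}{nd D_H^2(1-1/k)} = \dfrac{2k}{nd D_H^2 (k-1)}$, and taking the infimum over all nonconstant $f$ (the constant $f$ being excluded as usual) yields $\lambda(G,H)\ge \frac{2k}{nd D_H^2(k-1)}$.

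One should check this is consistent with the stated bound $\frac{2}{(n-1)dD_H^2}$. In fact $\frac{2k}{nd D_H^2(k-1)}\ge \frac{2}{(n-1)dD_H^2}$ precisely when $\frac{k}{n(k-1)}\ge \frac{1}{n-1}$, i.e. when $k(n-1)\ge n(k-1)$, i.e. $n\ge k$; and when $n<k$ one instead uses $1-\frac1k<1-\frac1n$ directly in Theorem \ref{denomupper}'s bound (only $k\le$ the number of values actually used by $f$ matters, and $f$ uses at most $n$ values), which gives $\sum_{u,v}d(f(u),f(v))^2 d_ud_v \le \frac{(nd)^2 D_H^2}{2}(1-\frac1n)$ and hence $R_f \ge \frac{2}{nd D_H^2(1-1/n)} = \frac{2}{(n-1)dD_H^2}$ in all cases. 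So the cleanest route is to apply Theorem \ref{denomupper} with the effective alphabet size $\min(n,k)$ rather than $k$.

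The only mild obstacle is this bookkeeping about whether to use $k$ or $n$ as the size of the target alphabet in Theorem \ref{denomupper}; since a function $f:V(G)\to V(H)$ can hit at most $\min(n,k)$ vertices of $H$, and $\|x\|_2^2\ge \frac1{|\mathrm{supp}(x)|}\|x\|_1^2$ is what actually drives that proof, one gets the factor $1-\frac1{\min(n,k)}$, which is at most $1-\frac1n$. Everything else is the same two-line computation as in Theorem \ref{naivelower}, so no genuine difficulty arises.
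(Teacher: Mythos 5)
Your proof is correct and follows the same strategy as the paper's: bound the numerator below by $1$ (one edge of $G$ must join distinct fibers of $f$) and the denominator above by a quantity of order $n^2d^2D_H^2/2$. The paper reaches the denominator bound more directly — for $d$-regular $G$ the denominator is $d^2\sum_{u,v}d(f(u),f(v))^2\leq d^2\binom{n}{2}D_H^2$, which immediately yields $\frac{2}{(n-1)dD_H^2}$ and sidesteps your $\min(n,k)$ bookkeeping — but your route through Theorem \ref{denomupper} with effective support size $\min(n,k)$ is equally valid and even marginally sharper when $k<n$.
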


\begin{proof}
As $G$ is regular, note that the denominator of $R_f$ may be written as $d^2\sum_{u, v} d(f(u), f(v))^2$. Hence, we have that for any nonconstant function $f:V(G)\to V(H)$,
\[ R_f\geq \frac{\vol{G}}{d^2{n\choose 2}D_H^2} = \frac{2}{d(n-1)D_H^2}.\]
\end{proof}

\begin{exa}
Construct a ``regularized dumbbell" graph as follows. First, take two copies of $K_{n/2}$. In each copy, select two vertices and delete the edge between them. Add two new edges between the two copies of $K_{n/2}\backslash\{e\}$ by connecting the each endpoint of the deleted edge in one copy to an endpoint of the deleted edge in the other copy.

Let $H=K_2$. Then, map the vertices in the dumbells to opposite vertices in $H$ as before. Then we obtain 
\[\lambda\leq\frac{\vol{G}\sum_{u\mtd v}d(f(u),f(v))^2}{\sum_{u,v}d(f(u),f(v))^2d_ud_v}\leq\frac{2n(n/2-1)}{(n/2-1)^2(n/2)^2}=\frac{8}{n(n/2-1)}=\frac{8}{nd}.\]

Note that the estimate given in Theorem \ref{naivelowerreg} is $2/nd$, and hence asymptotically, we have $\lambda(G, H)$ decays to 0 as quickly as possible.

\end{exa}


\section{Bounds relating $\lambda(G, H)$ to $\lambda(G, H')$}
Throughout this section, we will take the underlying metric space to be a graph $H$. We shall consider the effect to $\lambda(G, H)$ when changes are made to the graph $H$.

\begin{thm}\label{T:twographs}
Let $G$ be a connected graph, and let $H, H'$ be connected graphs on the same vertex set $V(H)$. Let $\lold=\lambda(G, H)$, and $\lnew=\lambda(G, H')$. Then, $\lnew\leq \frac{\Delta^2}{\delta^2}(1+S_G)\lold$, where $S_G$ is as in Theorem \ref{T:lowerbound}.
\end{thm}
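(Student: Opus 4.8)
The plan is to relate $R_f(G,H')$ and $R_f(G,H)$ for a single fixed function $f:V(G)\to V(H)=V(H')$, and then pass to the infimum. Fix a nonconstant $f$. The numerator and denominator of $R_f(G,H')$ differ from those of $R_f(G,H)$ only through the distance terms $d_{H'}(f(u),f(v))^2$ versus $d_H(f(u),f(v))^2$, so the entire argument reduces to comparing $d_{H'}(i,j)$ and $d_H(i,j)$ for $i,j\in V(H)$. Since both $H$ and $H'$ are connected on the same (finite) vertex set, all such distances lie between $1$ and $\diam{H}$ (resp.\ $\diam{H'}$), but we will not need to track the diameters explicitly: the cleaner route is to bound $R_f(G,H')$ from above directly using the denominator lower bound and numerator upper bound, rather than to compare term-by-term.

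Concretely, first I would obtain an upper bound on the numerator of $R_f(G,H')$: since $d_{H'}(f(u),f(v))\le \diam{H'}$, we have $\vol{G}\sum_{u\mtd v} d_{H'}(f(u),f(v))^2 \le \vol{G}\,\diam{H'}^2 m$. Simultaneously I would lower-bound the denominator of $R_f(G,H')$ using the technique of Theorem \ref{denomlower} — but here we cannot assume $\vol{G}\ge 6$, so instead I would use the crude bound $\sum_{u,v} d_{H'}(f(u),f(v))^2 d_u d_v \ge \sum_{u\mtd v} d_{H'}(f(u),f(v))^2 \ge 1$ combined with a degree comparison. Actually, the sharpest approach: write $\sum_{u,v} d_{H'}(f(u),f(v))^2 d_u d_v \ge \delta^2 \sum_{u,v} d_{H'}(f(u),f(v))^2$ and $\sum_{u\mtd v} d_{H'}(f(u),f(v))^2 \le \Delta^2 \cdot \frac{1}{\delta^2}\cdot(\text{something involving the }H'\text{ denominator})$ — this is where the $\Delta^2/\delta^2$ factor will naturally appear.

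The key step, and the main obstacle, is controlling the ratio $\frac{\sum_{u\mtd v} d_{H'}(f(u),f(v))^2}{\sum_{u,v} d_{H'}(f(u),f(v))^2 d_u d_v}$ in terms of data about $H$ rather than $H'$. The mechanism should mirror the proof of Theorem \ref{T:lowerbound}: by connectivity of $G$ and the triangle inequality in $H'$, every pair distance $d_{H'}(f(u),f(v))$ is at most $\diam{G}$ times $\max_{u\mtd v} d_{H'}(f(u),f(v))$, so $\sum_{u,v} d_{H'}(f(u),f(v))^2 \le (1+S_G)\sum_{u\mtd v} d_{H'}(f(u),f(v))^2$ after absorbing the off-edge pairs (this is precisely the estimate built into $S_G=(1-\rho)\binom{n}{2}\diam{G}^2$). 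Combining this with $\sum_{u,v} d_{H'}(f(u),f(v))^2 d_u d_v \ge \delta^2\sum_{u,v}d_{H'}(f(u),f(v))^2$ and $\vol{G}\sum_{u\mtd v} d_{H'}(f(u),f(v))^2 \le \Delta^2 \sum_{u,v}d_{H'}(f(u),f(v))^2 \cdot (\text{the }H\text{-denominator ratio})$ — more carefully, bounding $R_f(G,H')\le \frac{\Delta^2}{\delta^2}(1+S_G)$ times a ratio that is itself at most $R_g(G,H)$ for a suitable $g$ — yields $R_f(G,H')\le \frac{\Delta^2}{\delta^2}(1+S_G)\lold$. Taking the infimum over $f$ on the left gives $\lnew\le \frac{\Delta^2}{\delta^2}(1+S_G)\lold$. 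I expect the delicate bookkeeping to be making the comparison uniform so that the same function realizes (or nearly realizes) both infima; the natural fix is to take $f$ achieving $R_f(G,H)$ close to $\lold$ and feed the very same $f$ into the $H'$ computation, using that $V(H)=V(H')$.
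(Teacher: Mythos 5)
Your outline is essentially the paper's own argument: compare $R_f(G,H')$ with $R_f(G,H)$ for the same (near-optimal) $f$, pull out $\Delta^2/\delta^2$ from the degree products, and control the non-adjacent pairs by walking along paths in $G$ and applying the triangle inequality, which is exactly where the factor $1+S_G$ arises. The assembly you leave vague does close up — for instance, $R_f(G,H')\leq \vol{G}/\delta^2$ holds trivially since $\sum_{u\mtd v}d'(f(u),f(v))^2\leq\sum_{u,v}d'(f(u),f(v))^2$, while $R_f(G,H)\geq \vol{G}/(\Delta^2(1+S_G))$ follows from your $S_G$ estimate applied to the $H$-distances, and dividing these gives the claimed bound — so there is no genuine gap.
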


\begin{proof}
If we can find a constant, $\beta$ such that for all nonconstant $f:V(G)\to V(H)$,
\begin{equation}\label{E:beta}R_f(G, H)\leq \beta R_f(G, H')\end{equation}
then $\lnew\leq \beta\lold$.

Fix such a function $f$. Let $d$ denote the distance function on $H$, and $d'$ the distance function on $H'$. As in the proof of Theorem \ref{T:lowerbound}, note that if $u\nmtd_G v$, then $d(f(u), f(v))^2\leq D^2 C^2$, where $C=\max_{x\mtd y} d(f(x), f(y))$. Consider

\begin{eqnarray*}
\frac{R_f(G, H')}{R_f(G, H)} & = & \frac{\left(\sum_{u\mtd v} d'(f(u), f(v))^2\right)\left(\sum_{u, v\in V} d(f(u), f(v))^2d_ud_v\right)}{\left(\sum_{u, v\in V} d'(f(u), f(v))^2d_ud_v\right)\left(\sum_{u\mtd v} d(f(u), f(v))^2\right)}
\end{eqnarray*}
\begin{eqnarray*}
&=&  \frac{\left(\sum_{u\mtd v} d'(f(u), f(v))^2\right)\left(\sum_{u\mtd v}d(f(u), f(v))^2d_ud_v\right) + \left(\sum_{u\mtd v} d'(f(u), f(v))^2\right)\left(\sum_{u\nmtd v}d(f(u), f(v))^2d_ud_v\right)}{\left(\sum_{u\mtd v}d'(f(u), f(v))^2d_ud_v\right)\left(\sum_{u\mtd v}d(f(u), f(v))^2 \right)+\left(\sum_{u\nmtd v}d'(f(u), f(v))^2d_ud_v\right)\left(\sum_{u\mtd v}d(f(u), f(v))^2 \right)}\\
& \leq & \frac{\Delta^2}{\delta^2}   \frac{\left(\sum_{u\mtd v} d'(f(u), f(v))^2\right)\left(\sum_{u\mtd v}d(f(u), f(v))^2\right) + \left(\sum_{u\mtd v} d'(f(u), f(v))^2\right)\left(\sum_{u\nmtd v}d(f(u), f(v))^2\right)}{\left(\sum_{u\mtd v}d'(f(u), f(v))^2\right)\left(\sum_{u\mtd v}d(f(u), f(v))^2 \right)+\left(\sum_{u\nmtd v}d'(f(u), f(v))^2\right)\left(\sum_{u\mtd v}d(f(u), f(v))^2 \right)}\\
& \leq &  \frac{\Delta^2}{\delta^2} \frac{\left(\sum_{u\mtd v} d'(f(u), f(v))^2\right)\left(\sum_{u\mtd v}d(f(u), f(v))^2\right) + \left(\sum_{u\mtd v} d'(f(u), f(v))^2\right)\left(\sum_{u\nmtd v}d(f(u), f(v))^2\right)}{\left(\sum_{u\mtd v}d'(f(u), f(v))^2\right)\left(\sum_{u\mtd v}d(f(u), f(v))^2 \right)}\\
& = & \frac{\Delta^2}{\delta^2}\left( 1+  \frac{\sum_{u\nmtd v}d(f(u), f(v))^2}{\sum_{u\mtd v}d(f(u), f(v))^2}\right)\\
& \leq & \frac{\Delta^2}{\delta^2}\left( 1+  \frac{\left({n\choose 2}-m\right)D^2C^2}{C^2}\right)\\
& = & \frac{\Delta^2}{\delta^2}\left(1+S_G\right).
\end{eqnarray*}

Hence, we may take $\beta =  \frac{\Delta^2}{\delta^2}(1+S_G)$ in equation (\ref{E:beta}), as desired. 
\end{proof}

By applying Theorem \ref{T:twographs} in both directions, we obtain the following immediate corollaries.

\begin{cor}
Let $G$ be a connected graph, and let $H, H'$ be two connected graphs on the same vertex set $V(H)$. Let all notation be as in Theorem \ref{T:twographs}. Then
\[\frac{\delta^2}{\Delta^2(1+S_G)}\lold\leq \lnew\leq \frac{\Delta^2(1+S_G)}{\delta^2}\lold.\]
\end{cor}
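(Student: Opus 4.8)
The final statement is the corollary asserting
\[\frac{\delta^2}{\Delta^2(1+S_G)}\lold\leq \lnew\leq \frac{\Delta^2(1+S_G)}{\delta^2}\lold,\]
so the plan is simply to invoke Theorem \ref{T:twographs} twice, once in each direction. First I would apply Theorem \ref{T:twographs} with the roles of $H$ and $H'$ as stated, which immediately yields the upper bound $\lnew\leq \frac{\Delta^2}{\delta^2}(1+S_G)\lold$. Then I would apply the same theorem with the roles of $H$ and $H'$ interchanged: since both are connected graphs on the common vertex set $V(H)$, the hypotheses are symmetric in $H$ and $H'$, and the quantities $\Delta$, $\delta$, $S_G$ depend only on $G$, so the theorem gives $\lold\leq \frac{\Delta^2}{\delta^2}(1+S_G)\lnew$.

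Rearranging this second inequality gives $\lnew\geq \frac{\delta^2}{\Delta^2(1+S_G)}\lold$, which is exactly the lower bound. Combining the two displays yields the claimed sandwich. I should note explicitly that $S_G$ is a function of $G$ alone (it equals $(1-\rho)\binom{n}{2}D_G^2$) and so does not change when $H$ and $H'$ are swapped; similarly $\Delta = \Delta(G)$ and $\delta = \delta(G)$ are unchanged. This is the only point that needs a word of care, but it is immediate from the definitions.

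There is essentially no obstacle here — the corollary is a formal consequence of the preceding theorem, and the proof is two lines. The only thing worth being slightly careful about is making sure the reader sees that Theorem \ref{T:twographs} is genuinely symmetric in its two graph arguments (nothing in its statement or proof privileges $H$ over $H'$), so that the ``other direction'' is a legitimate instance of the same result rather than something requiring a separate argument.

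\begin{proof}
Apply Theorem \ref{T:twographs} directly to the pair $(H, H')$: this gives $\lnew\leq \frac{\Delta^2}{\delta^2}(1+S_G)\lold$, which is the stated upper bound. Now observe that the hypotheses of Theorem \ref{T:twographs} are symmetric in the two graphs $H$ and $H'$ (both are connected graphs on the vertex set $V(H)$), and that the quantities $\Delta$, $\delta$, and $S_G = (1-\rho)\binom{n}{2}D_G^2$ depend only on $G$, not on the metric space into which $G$ is embedded. Hence we may also apply Theorem \ref{T:twographs} with the roles of $H$ and $H'$ reversed, obtaining $\lold\leq \frac{\Delta^2}{\delta^2}(1+S_G)\lnew$, i.e.
\[\lnew\geq \frac{\delta^2}{\Delta^2(1+S_G)}\lold.\]
Combining the two inequalities gives the result.
\end{proof}
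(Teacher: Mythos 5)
Your proof is correct and matches the paper's approach exactly: the paper obtains this corollary by "applying Theorem \ref{T:twographs} in both directions," which is precisely the two applications (with $H$ and $H'$ swapped) that you carry out. Your added remark that $\Delta$, $\delta$, and $S_G$ depend only on $G$ is a reasonable point of care but not a substantive difference.
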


\begin{cor}
Let $G$ be a connected, $d$-regular graph with diameter $D$, and let $H, H'$ be connected graphs on vertex set $V(H)$. Then 
\[\frac{2}{n(n-1-d)D^2+2}\lold\leq\lnew\leq \left(1+\frac{n(n-1-d)D^2}{2}\right)\lold.\]
\end{cor}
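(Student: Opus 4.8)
The plan is to derive this corollary directly from the previous corollary by specializing to the case where $G$ is $d$-regular and simplifying the quantity $S_G$. Recall from Theorem \ref{T:lowerbound} (and its corollary) that $S_G = (1-\rho)\binom{n}{2}D^2$, where $\rho = m/\binom{n}{2}$ is the density of $G$. So the first step is purely arithmetic: compute $1-\rho$ and hence $S_G$ in terms of $n$, $d$, and $D$.

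When $G$ is $d$-regular on $n$ vertices, the number of edges is $m = nd/2$, so
\[
1-\rho = 1 - \frac{nd/2}{\binom{n}{2}} = 1 - \frac{nd/2}{n(n-1)/2} = 1 - \frac{d}{n-1} = \frac{n-1-d}{n-1}.
\]
Therefore
\[
S_G = (1-\rho)\binom{n}{2}D^2 = \frac{n-1-d}{n-1}\cdot\frac{n(n-1)}{2}\cdot D^2 = \frac{n(n-1-d)D^2}{2}.
\]
Consequently $1+S_G = 1 + \frac{n(n-1-d)D^2}{2} = \frac{n(n-1-d)D^2 + 2}{2}$.

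Next I would plug this into the bounds of the previous corollary, which for connected graphs $H, H'$ on the same vertex set give $\frac{\delta^2}{\Delta^2(1+S_G)}\lold \leq \lnew \leq \frac{\Delta^2(1+S_G)}{\delta^2}\lold$. Since $G$ is $d$-regular we have $\Delta = \delta = d$, so $\Delta^2/\delta^2 = 1$, and the bounds collapse to $\frac{1}{1+S_G}\lold \leq \lnew \leq (1+S_G)\lold$. Substituting the computed value of $S_G$ gives the upper bound $\lnew \leq \left(1+\frac{n(n-1-d)D^2}{2}\right)\lold$ immediately, and the lower bound becomes $\lnew \geq \frac{1}{1+\frac{n(n-1-d)D^2}{2}}\lold = \frac{2}{n(n-1-d)D^2 + 2}\lold$, exactly as stated. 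There is essentially no obstacle here: the only thing to be careful about is the bookkeeping in the density computation (that $m = nd/2$ and that this combines cleanly with $\binom{n}{2}$), and noting that regularity is precisely what forces $\Delta = \delta$ so that the $\Delta^2/\delta^2$ factors disappear. The hypothesis that $H$ and $H'$ lie on the same vertex set is inherited unchanged from the previous corollary and needs no further comment.
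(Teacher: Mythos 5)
Your proposal is correct and is exactly the computation the paper intends: the corollary is stated as an immediate consequence of the preceding corollary, obtained by substituting $m=nd/2$ into $S_G=(1-\rho)\binom{n}{2}D^2$ to get $S_G=\frac{n(n-1-d)D^2}{2}$ and using $\Delta=\delta=d$ to eliminate the ratio $\Delta^2/\delta^2$. No further comment is needed.
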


In a similar way, we have the following bound.

\begin{thm}\label{T:twographssparse}
Let $G$ be a connected graph with $\vol{G}\geq 6$, and let $H, H'$ be connected graphs on the same vertex set $V(H)$, with $|V(H)|=k$. Let $\lold=\lambda(G, H)$, and $\lnew=\lambda(G, H')$, and let $D_H$ denote the diameter of $H$. Then, $\lnew\leq \frac{\Delta^2D_H}{\delta^2}(m(k-1)^2)\lold$.
\end{thm}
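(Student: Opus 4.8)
The plan is to reuse the mechanism from the proof of Theorem~\ref{T:twographs}: it is enough to produce a single constant $\beta$ with $R_f(G,H')\le\beta\,R_f(G,H)$ for every nonconstant $f:V(G)\to V(H)$, for then taking the infimum over such $f$ yields $\lnew\le\beta\lold$. Writing the ratio out,
\[\frac{R_f(G,H')}{R_f(G,H)}=\frac{\bigl(\sum_{u\mtd v}d_{H'}(f(u),f(v))^2\bigr)\bigl(\sum_{u,v}d_H(f(u),f(v))^2 d_ud_v\bigr)}{\bigl(\sum_{u,v}d_{H'}(f(u),f(v))^2 d_ud_v\bigr)\bigl(\sum_{u\mtd v}d_H(f(u),f(v))^2\bigr)},\]
so the common factor $\vol G$ drops out and the whole problem reduces to four crude one-sided bounds, one on each of the four sums appearing here.

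First I would bound the two numerator sums. Since $H'$ is connected on $k$ vertices, every pairwise distance in $H'$ is at most $k-1$, so $\sum_{u\mtd v}d_{H'}(f(u),f(v))^2\le m(k-1)^2$ (summing over the $m$ edges of $G$). On the other side, connectivity of $G$ together with nonconstancy of $f$ forces some edge $u\mtd v$ with $f(u)\ne f(v)$, so $\sum_{u\mtd v}d_H(f(u),f(v))^2\ge 1$ — the estimate already used in Theorem~\ref{naivelower}. For the two denominator sums, let $N$ be the number of ordered pairs $(u,v)$ with $f(u)\ne f(v)$; exactly these $N$ pairs contribute to each denominator sum. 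A contributing term of $\sum_{u,v}d_H(f(u),f(v))^2 d_ud_v$ is at most $D_H^2\Delta^2$, while a contributing term of $\sum_{u,v}d_{H'}(f(u),f(v))^2 d_ud_v$ is at least $1\cdot\delta^2$, hence
\[\sum_{u,v}d_H(f(u),f(v))^2 d_ud_v\le D_H^2\Delta^2 N\qquad\text{and}\qquad \sum_{u,v}d_{H'}(f(u),f(v))^2 d_ud_v\ge \delta^2 N.\]
Substituting these four bounds into the displayed ratio, the factor $N$ cancels and $R_f(G,H')/R_f(G,H)\le \frac{\Delta^2 D_H^2}{\delta^2}\,m(k-1)^2$; taking the infimum over nonconstant $f$ gives the theorem with $\beta=\frac{\Delta^2 D_H^2}{\delta^2}m(k-1)^2$. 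The hypothesis $\vol G\ge 6$ is not actually needed for this routing; it is present so that one may instead invoke Theorem~\ref{denomlower} to get $\sum_{u,v}d_{H'}(f(u),f(v))^2 d_ud_v\ge\vol G-1$, which is sometimes the sharper estimate.

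I do not expect a genuine obstacle here: each of the four estimates is a single line, and the ``hard part'' is purely bookkeeping — choosing which crude bound to attach to which sum so that the constant assembles into the advertised shape. The one choice that matters conceptually is to keep the dependence linear in $m=|E(G)|$, by bounding $\sum_{u\mtd v}d_{H'}(f(u),f(v))^2$ edge-by-edge rather than routing the comparison through Theorems~\ref{denomlower} and~\ref{denomupper} (which would reintroduce factors of $\vol G$); this is exactly what makes the present bound an improvement on the $(1+S_G)$-type bound of Theorem~\ref{T:twographs} in the sparse regime, since there $S_G$ is proportional to the number of non-edges ${n\choose 2}-m$ and is therefore large precisely when $G$ is sparse.
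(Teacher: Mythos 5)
Your four estimates are each correct, and assembling them as you do gives a valid bound; but note that the constant you actually obtain is $\frac{\Delta^2 D_H^2}{\delta^2}\,m(k-1)^2$, with $D_H$ \emph{squared}, whereas the theorem as printed has $D_H$ to the first power. Since $D_H\geq 1$ your bound is formally weaker than the literal claim, so you should not assert that your $\beta$ ``gives the theorem'' without flagging the exponent. That said, the discrepancy is almost certainly a typo in the statement rather than a defect in your argument: the paper's own proof does not produce the printed constant either --- it derives $\lnew\leq \frac{3}{5}\vol{G}D_H^2D_{H'}^2\,\lold$, and that is the form the subsequent corollary actually uses. No per-pair comparison can yield a first power of $D_H$, since $d_H(i,j)^2/d_{H'}(i,j)^2$ can genuinely be as large as $D_H^2$ (take $H$ a path and $H'$ complete).

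Your route is also genuinely different from the paper's. The paper bounds the two denominator sums \emph{globally}: the $H$-denominator from above by $\frac{\vol{G}^2D_H^2}{2}\left(1-\frac{1}{k}\right)$ via Theorem \ref{denomupper}, and the $H'$-denominator from below by $\vol{G}-1$ via Theorem \ref{denomlower} (this is the only place the hypothesis $\vol{G}\geq 6$ enters, exactly as you surmise), and then bounds the numerator ratio term by term by $D_{H'}^2$; the result is a constant proportional to $\vol{G}D_H^2D_{H'}^2$ with no degree ratio. You instead pair the two denominator sums term by term over the $N$ contributing ordered pairs, paying a factor $\Delta^2/\delta^2$ but letting $N$ cancel, and bound the two numerator sums separately by $m(k-1)^2$ and by $1$. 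Your version dispenses with $\vol{G}\geq 6$ and keeps the dependence linear in $m$, which is the point of this theorem in the sparse regime; the paper's version is sharper when $G$ is far from regular or when $D_{H'}$ is much smaller than $k-1$. Both are legitimate; just state explicitly that what you have proved is the theorem with $D_H^2$ in place of $D_H$.
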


\begin{proof}
As in the proof of Theorem \ref{T:twographs}, we consider the ratio of $R_f(G, H')$ to $R_f(G, H)$ for an arbitrary nonconstant function $f:V(G)\to V(H)$. By Theorem \ref{denomlower}, we have

\begin{eqnarray*}
\frac{R_f(G, H')}{R_f(G, H)}& = & \frac{\left(\sum_{u\mtd v} d'(f(u), f(v))^2\right)\left(\sum_{u, v\in V} d(f(u), f(v))^2d_ud_v\right)}{\left(\sum_{u, v\in V} d'(f(u), f(v))^2d_ud_v\right)\left(\sum_{u\mtd v} d(f(u), f(v))^2\right)}
\end{eqnarray*}
\begin{eqnarray*}
&\leq&  \frac{\vol{G}^2D^2_H(1-1/k)}{2(\vol{G}-1)} \frac{\sum_{u\mtd v} d'(f(u), f(v))^2}{\sum_{u\mtd v} d(f(u), f(v))^2}\\
&\leq&  \frac{\vol{G}^2D^2_H(1-1/k)}{2(\vol{G}-1)}D^2_{H'}\\
&\leq&  \frac{3}{5}\vol{G}D^2_HD^2_{H'}.
\end{eqnarray*}
\end{proof}
Here, we have used the facts that given a fixed function $f$, any term in the sum $\sum_{u\mtd v}d(f(u),f(v))^2$ can increase by at most a factor of $D^2_{H'}$, that $1-1/k\leq 1$, and that $\frac{\vol{G}}{\vol{G}-1}\leq\frac{6}{5}$. As before, we obtain the following immediate corollary.

\begin{cor}
Let $G$ be a connected graph, and let $H, H'$ be connected graphs on the same vertex set $V(H)$, with $|V(H)|=k$. Let $\lold=\lambda(G, H)$, and $\lnew=\lambda(G, H')$, and let $D_H, D_{H'}$ denote the diameter of $H, H'$, respectively. Then
\[ \frac{5}{3\vol{G}D^2_HD^2_{H'}}\lold\leq\lnew\leq \frac{3\vol{G}D^2_HD^2_{H'}}{5}\lold.\]
\end{cor}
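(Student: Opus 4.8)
The plan is to derive both inequalities directly from Theorem \ref{T:twographssparse} by exploiting the symmetry of its conclusion in the pair $(H, H')$. The key observation is that although the theorem is advertised with the asymmetric constant $\frac{\Delta^2 D_H}{\delta^2}m(k-1)^2$, its proof in fact establishes the symmetric multiplicative bound $R_f(G, H') \leq \frac{3}{5}\vol{G}D^2_H D^2_{H'}\, R_f(G, H)$, valid for every nonconstant $f:V(G)\to V(H)$. It is this form of the estimate that I would invoke, since it is both symmetric in $(H,H')$ and the one whose constant matches the corollary.

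First I would record the upper bound. Since $R_f(G, H') \leq \frac{3}{5}\vol{G}D^2_H D^2_{H'}\, R_f(G, H)$ holds for all nonconstant $f$, I would pass to the infimum over $f$ exactly as in the passage from (\ref{E:beta}) to the conclusion of Theorem \ref{T:twographs}: given any $\eps>0$, choose $f$ with $R_f(G, H) < \lold + \eps$, so that $\lnew \leq R_f(G, H') \leq \frac{3}{5}\vol{G}D^2_H D^2_{H'}(\lold+\eps)$, and let $\eps\to 0$. This yields $\lnew \leq \frac{3\vol{G}D^2_H D^2_{H'}}{5}\lold$, the right-hand inequality.

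For the left-hand inequality I would apply Theorem \ref{T:twographssparse} a second time with the roles of $H$ and $H'$ interchanged. Because the product $D^2_H D^2_{H'}$ is invariant under this swap, the theorem gives $\lold \leq \frac{3}{5}\vol{G}D^2_{H'} D^2_H\,\lnew$, which rearranges to $\lnew \geq \frac{5}{3\vol{G}D^2_H D^2_{H'}}\lold$. Combining the two displays produces the stated two-sided bound.

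I expect no genuine obstacle here, as the content is entirely contained in Theorem \ref{T:twographssparse} and the corollary is a matter of applying it in both directions and using the symmetry of the constant. The only point requiring care is the mismatch between the constant in the printed statement of Theorem \ref{T:twographssparse} and the tighter, symmetric constant $\frac{3}{5}\vol{G}D^2_H D^2_{H'}$ actually produced in its proof; I would be careful to cite the latter. I would also confirm that the hypothesis $\vol{G}\geq 6$ required by Theorem \ref{T:twographssparse} is in force, or note explicitly that it carries over to the corollary, since it is needed for the underlying estimate to hold.
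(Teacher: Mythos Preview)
Your proposal is correct and matches the paper's approach exactly: the corollary is obtained by applying Theorem~\ref{T:twographssparse} in both directions, using the symmetric bound $\frac{R_f(G,H')}{R_f(G,H)}\leq \frac{3}{5}\vol{G}D_H^2 D_{H'}^2$ actually established in its proof. Your observation about the mismatch between the stated constant and the one proved, and about the implicit hypothesis $\vol{G}\geq 6$, is also apt.
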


In comparing the bounds found in Theorems \ref{T:twographs} and \ref{T:twographssparse}, it seems that the stronger bound will be decided by the density of $G$. Indeed, as $S_G= ({n\choose 2}-m)D^2$, we note that if $m$ is quite large, Theorem \ref{T:twographs} will give a stronger bound, whereas if $m$ is quite small, the bound in Theorem \ref{T:twographssparse} will likely be stronger. 
Finally, we can improve the bound if a further constraint is made on $H$.
\begin{thm}
Take $H$ to be a complete graph and obtain $H'$ by removing one edge from $H$. Let $\lambda=\lambda(G,H)$ and $\lnew=\lambda(G,H')$. Then, 
\[\frac{\delta^2}{4\Delta^2}\lold\leq\lnew\leq\frac{4\Delta^2}{\delta^2}\lold\]
\end{thm}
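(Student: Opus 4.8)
The plan is to reuse the $\beta$-comparison strategy from the proof of Theorem~\ref{T:twographs}: I would show that there is a constant $\beta$ with $R_f(G,H)\le\beta R_f(G,H')$ and $R_f(G,H')\le\beta R_f(G,H)$ for every nonconstant $f\colon V(G)\to V(H)$, which immediately yields $\lnew\le\beta\lold$ and $\lold\le\beta\lnew$. I would aim to take $\beta=4$ and then get the stated inequalities for free, since $\delta\le\Delta$ gives $\frac{\delta^2}{4\Delta^2}\le\frac14$ and $\frac{4\Delta^2}{\delta^2}\ge4$.

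The first step is to record how the single edge deletion affects distances. Since $H=K_k$ and $H'$ must be connected we have $k\ge3$, and if the deleted edge is $\{a,b\}$, then any third vertex $c$ gives a path $a\sim c\sim b$ in $H'$, so $d_{H'}(a,b)=2$ while $d_{H'}(i,j)=1=d_H(i,j)$ for every other pair $i\ne j$ (and both vanish when $i=j$). Hence for all $i,j\in V(H)$ we have the pointwise bound $d_H(i,j)^2\le d_{H'}(i,j)^2\le 4\,d_H(i,j)^2$.

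The second step is to write the ratio of Rayleigh quotients as
\[ \frac{R_f(G,H')}{R_f(G,H)}=\left(\frac{\sum_{u\sim v}d_{H'}(f(u),f(v))^2}{\sum_{u\sim v}d_H(f(u),f(v))^2}\right)\left(\frac{\sum_{u,v}d_H(f(u),f(v))^2d_ud_v}{\sum_{u,v}d_{H'}(f(u),f(v))^2d_ud_v}\right) \]
and apply the pointwise bound termwise inside each of the four sums. This pins the first factor in $[1,4]$ and the second in $[\tfrac14,1]$, so $\tfrac14\le R_f(G,H')/R_f(G,H)\le4$ for every nonconstant $f$ (all four sums are strictly positive because $G$ is connected and $f$ is nonconstant). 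Taking the infimum over $f$ in each direction gives $\tfrac14\lold\le\lnew\le4\lold$, which implies the claim.

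I do not expect a real obstacle; the only delicate point is orienting each inequality correctly, since the "edge" sum $\sum_{u\sim v}(\cdot)^2$ appears in a numerator while the "all pairs" sum $\sum_{u,v}(\cdot)^2d_ud_v$ appears in a denominator, and the pointwise bound must be used in the right direction in each place — exactly the bookkeeping already carried out in Theorem~\ref{T:twographs}. I note that if one instead mimics Theorem~\ref{T:twographs} literally and discards the degree factors via $\delta^2\le d_ud_v\le\Delta^2$ rather than cancelling them against one another, one lands on precisely the constant $\frac{4\Delta^2}{\delta^2}$ quoted in the statement; retaining the degree factors is what sharpens $\beta$ to $4$, so one may present the argument either way.
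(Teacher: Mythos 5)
Your proof is correct, and it actually establishes the stronger two-sided bound $\tfrac14\lold\le\lnew\le 4\lold$, of which the stated inequalities are a weakening via $\delta\le\Delta$. The route is genuinely different from the paper's. The paper exploits completeness of $H$ to collapse the $H$-sums (replacing $\sum_{u,v}d(f(u),f(v))^2d_ud_v$ by $\sum_{u,v}d_ud_v$ and $\sum_{u\sim v}d(f(u),f(v))^2$ by $m$) and then bounds the degree products by $\delta^2$ and $\Delta^2$, which is exactly where the factor $\Delta^2/\delta^2$ enters; note that those replacements, and the later step $\sum_{u,v}d'(f(u),f(v))^2d_ud_v\ge\delta^2\sum_{u,v}1$, are equalities/inequalities only when $f$ identifies no two vertices of $G$, so as written the paper's argument silently restricts to injective $f$. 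Your pointwise comparison $d_H(i,j)^2\le d_{H'}(i,j)^2\le 4\,d_H(i,j)^2$ (valid for all pairs, including $i=j$ and the deleted edge, once one notes $k\ge 3$ for $H'$ to be connected) applies termwise to all four sums for an arbitrary nonconstant $f$, cancels the degree factors entirely, and so is both sharper and free of that gap. The only thing worth adding explicitly is the one-line justification that the edge sum $\sum_{u\sim v}d_H(f(u),f(v))^2$ is strictly positive: since $G$ is connected and $f$ is nonconstant, some edge of $G$ has endpoints with distinct images, which you do mention in passing.
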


\begin{proof}
\begin{eqnarray*}
\frac{\left(\frac{\vol{G}\sum_{u\mtd v} d'(f(u), f(v))^2}{\sum_{u, v\in V} d'(f(u), f(v))^2d_ud_v}\right)}{\left( \frac{\vol{G}\sum_{u\mtd v} d(f(u), f(v))^2}{\sum_{u, v\in V} d(f(u), f(v))^2d_ud_v}\right)} & = & \frac{\left(\sum_{u\mtd v} d'(f(u), f(v))^2\right)\left(\sum_{u, v\in V} d(f(u), f(v))^2d_ud_v\right)}{\left(\sum_{u, v\in V} d'(f(u), f(v))^2d_ud_v\right)\left(\sum_{u\mtd v} d(f(u), f(v))^2\right)}
\end{eqnarray*}
\begin{eqnarray*}
&=&  \frac{\left(\sum_{u\mtd v} d'(f(u), f(v))^2\right)\left(\sum_{u, v}d(f(u), f(v))^2d_ud_v\right)}{\left(\sum_{u, v}d'(f(u), f(v))^2d_ud_v\right)\left(\sum_{u\mtd v}d(f(u), f(v))^2 \right)}\\
&=&  \frac{\left(\sum_{u\mtd v} d'(f(u), f(v))^2\right)\left(\sum_{u, v}d_ud_v\right)}{m\sum_{u, v}d'(f(u), f(v))^2d_ud_v}\\
&\leq&  \frac{\binom{n}{2}\Delta^2}{m}\frac{\sum_{u\mtd v} d'(f(u), f(v))^2}{\sum_{u, v}d'(f(u), f(v))^2d_ud_v}\\
&\leq&  \frac{\binom{n}{2}\Delta^2}{m\delta^2}\frac{\sum_{u\mtd v} 4}{\sum_{u, v}1}=\frac{4\binom{n}{2}\Delta^2m}{m\delta^2\binom{n}{2}}=\frac{4\Delta^2}{\delta^2}.
\end{eqnarray*}
So, we have that $\lnew\leq\frac{4\Delta^2}{\delta^2}\lold$, and going in the opposite direction, we obtain the stated result.
\end{proof}
We now turn to the question of whether $\lambda$ is monotone in $H$.

\begin{thm}\label{T:subgraphsofH}
Let $H'$ be a connected subgraph of $H$. Then, $\frac{1}{D_{H'}^2}\leq\frac{\lambda(G,H')}{\lambda(G,H)}$.\end{thm}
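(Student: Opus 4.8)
The plan is to compare $R_f(G,H')$ and $R_f(G,H)$ for an arbitrary nonconstant $f:V(G)\to V(H')\subseteq V(H)$ and then pass to the infimum, exactly as in the proofs of Theorems~\ref{T:twographs} and~\ref{T:twographssparse}. The crucial observation is that since $H'$ is a subgraph of $H$ on the same vertex set, distances can only shrink when we pass from $H'$ to $H$: for all $i,j\in V(H')$ we have $d_H(i,j)\le d_{H'}(i,j)$, and trivially $d_{H'}(i,j)\le D_{H'}\,d_H(i,j)$ since $d_H(i,j)\ge 1$ whenever $i\ne j$. So for every pair $u,v$ (adjacent or not in $G$), $d_H(f(u),f(v))^2\le d_{H'}(f(u),f(v))^2 \le D_{H'}^2\, d_H(f(u),f(v))^2$.

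Writing $R_f(G,H')/R_f(G,H)$ as the product of ratios
\[
\frac{R_f(G,H')}{R_f(G,H)}
= \frac{\sum_{u\mtd v} d_{H'}(f(u),f(v))^2}{\sum_{u\mtd v} d_H(f(u),f(v))^2}
\cdot \frac{\sum_{u,v} d_H(f(u),f(v))^2 d_ud_v}{\sum_{u,v} d_{H'}(f(u),f(v))^2 d_ud_v},
\]
I would bound the first factor from below by $1$ (the numerator terms dominate the denominator terms term-by-term, since $d_{H'}\ge d_H$) and the second factor from below by $1/D_{H'}^2$ (the numerator terms are at least $1/D_{H'}^2$ times the corresponding denominator terms). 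Multiplying gives $R_f(G,H')\ge \frac{1}{D_{H'}^2} R_f(G,H)\ge \frac{1}{D_{H'}^2}\lambda(G,H)$; taking the infimum over $f$ yields $\lambda(G,H')\ge \frac{1}{D_{H'}^2}\lambda(G,H)$, which rearranges to the claim.

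There is no real obstacle here beyond being careful that the two sums in each factor are over matching index sets so that the term-by-term comparison is legitimate — the numerator and denominator of the first factor both range over $u\mtd v$, and those of the second both range over all ordered pairs $(u,v)$ with weights $d_ud_v$. One mild point to note is that $f$ nonconstant on $V(H')$ guarantees both denominators are nonzero, so the ratios are well defined; this is exactly the hypothesis under which $\lambda(G,H)$ and $\lambda(G,H')$ are computed. No use of the density/volume bounds from Section~3 is needed, which is why this bound is cleaner than those in Theorems~\ref{T:twographs} and~\ref{T:twographssparse}.
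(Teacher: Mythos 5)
Your argument is correct and is essentially the paper's own proof: both rest on the two-sided comparison $d_H(i,j)\le d_{H'}(i,j)\le D_{H'}\,d_H(i,j)$ for $i,j\in V(H')$, applied separately to the numerator and denominator before passing to the infimum over $f:V(G)\to V(H')$ (the paper just phrases it as restricting the infimum defining $\lambda(G,H)$ rather than as a ratio of Rayleigh quotients). One cosmetic remark: you say $H'$ is a subgraph of $H$ ``on the same vertex set,'' which the theorem does not assume, but your argument never actually uses this, only that $V(H')\subseteq V(H)$ and $d_H\ge 1$ on distinct vertices.
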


\begin{proof}Note that for any pair of vertices $u, v$, we have $d_H(u, v)\leq d_{H'}(u, v)$. Moreover, $\frac{d_{H'}(u,v)}{d_H(u,v)}\leq D_{H'}$, and thus
\begin{eqnarray*}
\lambda(G,H)&=&\inf_{f:V(G)\rightarrow V(H)}\frac{\vol{G}\sum_{u\mtd v}d_H(f(u),f(v))^2}{\sum_{u,v}d_H(f(u),f(v))^2d_ud_v}\\
&\leq &\inf_{f:V(G)\rightarrow V(H')}\frac{\vol{G}\sum_{u\mtd v}d_{H'}(f(u),f(v))^2}{\sum_{u,v}d_H(f(u),f(v))^2d_ud_v}\\
&\leq& D_{H'}^2\inf_{f:V(G)\rightarrow V(H')}\frac{\vol{G}\sum_{u\mtd v}d_{H'}(f(u),f(v))^2}{\sum_{u,v}d_{H'}(f(u),f(v))^2d_ud_v}=D_{H'}^2\lambda(G,H'),
\end{eqnarray*}
as desired.
\end{proof}

It is straightforward to verify that $\lambda(K_{3,3}, K_2)=1=\lambda(K_{3,3}, K_4)$. Suppose that we label the vertices of $K_4$ as $0,1,2,3$. Let $K_4\ba\{e\}$ be the complete graph on four vertices, with the edge $e$ between 0 and 1 removed. Let $K_{3,3}$ have partition $v_1,v_2,v_3$ and $v_4,v_5,v_6$. Applying the function that maps $v_1,...,v_6$ in $K_{3,3}$ to $3,3,2,3,1,0$, respectively, shows that $\lambda(K_{3,3}, K_4\ba\{e\})\leq\frac{14}{15}<1$. Hence, as $K_2, K_4\ba\{e\}$ are both subgraphs of $K_4$, then $\lambda$ is not monotone in $H$.

However, we do obtain Theorem \ref{T:ktwoextreme} as a corollary:

\begin{cor}\label{C:Ktwo}
Let $H$ be a connected graph on $m>1$ vertices. Then $\lambda(G, H)\leq \lambda(G, K_2)$.
\end{cor}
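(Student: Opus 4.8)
The plan is to obtain this as a one-line consequence of Theorem~\ref{T:subgraphsofH}. Since $H$ is connected with $m>1$ vertices it contains at least one edge, and any single edge of $H$ together with its two endpoints is a copy of $K_2$ that is a connected subgraph of $H$. Because $D_{K_2}=1$, applying Theorem~\ref{T:subgraphsofH} with $H'=K_2$ gives $\lambda(G,H)\le D_{K_2}^2\lambda(G,K_2)=\lambda(G,K_2)$, which is the assertion (and also establishes Theorem~\ref{T:ktwoextreme}).

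If a self-contained argument is wanted instead, I would produce, for each nonconstant $g\colon V(G)\to V(K_2)$, a nonconstant $f\colon V(G)\to V(H)$ with $R_f(G,H)=R_g(G,K_2)$; taking the infimum over all such $g$ then yields $\lambda(G,H)\le\lambda(G,K_2)$ directly. Write $V(K_2)=\{0,1\}$, fix an edge $\{i,j\}$ of $H$, and given $g$ set $f(v)=i$ if $g(v)=0$ and $f(v)=j$ if $g(v)=1$. For every pair $u,v\in V(G)$, $d_H(f(u),f(v))$ equals $1$ if $g(u)\ne g(v)$ and $0$ otherwise, using $d_H(i,j)=1$; the same is then true of the squared distances, since all values lie in $\{0,1\}$. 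Hence the numerator $\vol{G}\sum_{u\sim v}d_H(f(u),f(v))^2$ and the denominator $\sum_{u,v}d_H(f(u),f(v))^2 d_ud_v$ of $R_f(G,H)$ agree term by term with those of $R_g(G,K_2)$, so $R_f(G,H)=R_g(G,K_2)$. Moreover $f$ is nonconstant exactly when $g$ is, and every nonconstant map $V(G)\to V(K_2)$ is of this form up to relabeling the two vertices, so $\lambda(G,H)=\inf_f R_f(G,H)\le\inf_g R_g(G,K_2)=\lambda(G,K_2)$.

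There is no real obstacle here; only two small bookkeeping points deserve attention. First, $R_f$ is defined only for nonconstant $f$, so one must note that the correspondence $g\leftrightarrow f$ preserves nonconstancy — immediate, since $g$ and $f$ induce the same partition of $V(G)$ into two nonempty parts. Second, the reason the two Rayleigh quotients coincide rather than merely being comparable is that every pairwise distance in play lies in $\{0,1\}$, so squaring does nothing; this is exactly where $D_{K_2}=1$ (equivalently, $d_H(i,j)=1$) enters. I would present the Theorem~\ref{T:subgraphsofH} deduction as the proof proper and mention the explicit embedding as an alternative viewpoint.
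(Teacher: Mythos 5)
Your primary argument is exactly the paper's proof: apply Theorem~\ref{T:subgraphsofH} with $H'=K_2$ and use $D_{K_2}=1$ to get $\lambda(G,H)\le D_{K_2}^2\lambda(G,K_2)=\lambda(G,K_2)$. The proposal is correct, and the additional self-contained embedding argument, while also valid, is just a bonus on top of the same route.
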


\begin{proof}
Note that as $H$ is a connected graph, then $H'=K_2$ is a subgraph of $H$. Moreover, $D_{H'}=1$, and hence by Theorem \ref{T:subgraphsofH}, we have $\lambda(G, H)\leq \lambda(G, H')$.
\end{proof}

Hence, although we do not have monotonicity in $H$, we have that $H=K_2$ always provides an extreme value for $\lambda$. 

On the other hand, as every graph on $k$ vertices is a subgraph of $K_j$ for all $j>k$, we have the following corollary.

\begin{cor}\label{C:complete}
Let $H$ be a graph on $k$ vertices. Then $\lambda(G, H)\geq \frac{1}{D_H^2}\lambda(G, K_j)$ for all $j\geq k$. In particular, if $j>k$, then $\lambda(G, K_k)\geq \lambda(G, K_j)$.
\end{cor}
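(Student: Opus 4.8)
The plan is to read this off directly from Theorem~\ref{T:subgraphsofH}, after matching up the graphs correctly. The key observation is purely combinatorial: whenever $j\geq k$, the graph $H$ on $k$ vertices appears as a connected subgraph of $K_j$, simply by identifying $V(H)$ with $k$ of the $j$ vertices of $K_j$ and noting that $K_j$ contains every graph on those $k$ vertices (together with all further edges). Connectedness of $H$ is guaranteed by the standing convention that all graphs used as metric spaces are connected, so Theorem~\ref{T:subgraphsofH} is applicable.

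First I would invoke Theorem~\ref{T:subgraphsofH} with the roles of $H$ and $H'$ in that theorem played by $K_j$ and $H$, respectively. Since the subgraph in question is $H$ itself, the conclusion of that theorem becomes $\frac{1}{D_H^2}\leq \frac{\lambda(G,H)}{\lambda(G,K_j)}$, which rearranges to $\lambda(G,H)\geq \frac{1}{D_H^2}\,\lambda(G,K_j)$. This is exactly the first assertion, valid for every $j\geq k$.

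For the "in particular" clause, I would specialize the inequality just obtained to the case $H=K_k$ with $j>k$: here $K_k$ is a connected subgraph of $K_j$, and $D_{K_k}=1$ since $k>1$, so the diameter factor is trivial and we get $\lambda(G,K_k)\geq \frac{1}{1^2}\lambda(G,K_j)=\lambda(G,K_j)$, as claimed.

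I do not expect any genuine obstacle; the whole argument is the reduction to Theorem~\ref{T:subgraphsofH}. The only points that need a word of justification are that $H$ really does occur as a \emph{connected} subgraph of $K_j$ (immediate, given the convention on metric-space graphs) and that $D_{K_k}=1$ when $k>1$, which is what makes the second statement clean.
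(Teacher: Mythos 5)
Your proof is correct and is exactly the argument the paper intends: the corollary is read off from Theorem~\ref{T:subgraphsofH} by taking the subgraph $H'$ there to be $H$ (a connected subgraph of $K_j$ for $j\geq k$), and the ``in particular'' clause follows since $D_{K_k}=1$. No issues.
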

It is clear that once we can map $V(G)$ injectively into a complete graph $H$, then increasing the size of $H$ has no effect on $\lambda(G,H)$. So finally, we have:
\[\lambda(G,K_2)\geq\lambda(G,K_3)\geq ... \geq \lambda(G,K_n)=\lambda(G,K_{n+1})=...\]

\section{Bounds relating $\lambda(G, H)$ to $\lambda(G', H)$}

Here we consider the impact on $\lambda$ of adding or deleting edges to the graph $G$. We first consider what happens to $\lambda$ when one edge is added to $G$.

\begin{thm}
Let $G$ be a connected graph with $\vol{G}\geq 6$, and suppose that $G'$ is obtained from $G$ by adding one edge from $\overline{E(G)}$. Let $H$ be a graph with diameter $D_H$ and $|V(H)|=k$. Then \begin{equation}\label{upp}\frac{\lambda(G',H)}{\lambda(G,H)}\leq \left(1+\frac{2}{\vol{G}}\right)(1+D_H^2)\end{equation}
and
\begin{equation}\label{low}\frac{\lambda(G',H)}{\lambda(G,H)}\geq \left(1+\frac{2}{\vol{G}}\right)\left[\max\left\{\left(\frac{\vol{G}-1}{\vol{G}-1+D_H^2(2\vol{G}+1)}\right), \frac{1}{4}\right\} \right]\end{equation}
\end{thm}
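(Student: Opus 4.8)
The plan is to mimic the approach used for Theorems \ref{T:twographs} and \ref{T:twographssparse}: it suffices to find constants $\gamma\le\Gamma$, independent of the nonconstant function $f:V(G)\to V(H)$, with $\gamma R_f(G,H)\le R_f(G',H)\le\Gamma R_f(G,H)$. Since $G$, $G'$ and $H$ are all connected, both $\lambda(G,H)$ and $\lambda(G',H)$ are attained as minima of $R_f$ over nonconstant $f$ and are strictly positive, so evaluating each inequality at the relevant minimizer gives $\gamma\lambda(G,H)\le\lambda(G',H)\le\Gamma\lambda(G,H)$.

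Let $e=\{a,b\}$ be the added edge, so $V(G')=V(G)$, $E(G')=E(G)\cup\{e\}$, and $\vol{G'}=\vol{G}+2$. Fix a nonconstant $f$ and set $N=\sum_{\{u,v\}\in E(G)}d(f(u),f(v))^2$, so that the corresponding numerator sum for $G'$ is $N':=N+d(f(a),f(b))^2$; let $W$ and $W'$ be the denominator $\sum_{u,v}d(f(u),f(v))^2d_ud_v$ computed with the degrees of $G$ and of $G'$, respectively. A one-line computation gives
\[\frac{R_f(G',H)}{R_f(G,H)}=\Bigl(1+\tfrac{2}{\vol{G}}\Bigr)\cdot\frac{N'}{N}\cdot\frac{W}{W'},\]
so everything reduces to two-sided estimates on $N'/N$ and $W/W'$; the leading factor $1+2/\vol{G}$ is precisely the one that appears in both (\ref{upp}) and (\ref{low}).

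For $N'/N$: as $G$ is connected and $f$ nonconstant, some edge of $G$ joins vertices with distinct images, so $N\ge1$; together with $d(f(a),f(b))^2\le D_H^2$ this yields $1\le N'/N\le 1+D_H^2$. For $W/W'$ I will use two comparisons. First, $d_u(G)\le d_u(G')\le d_u(G)+1\le 2d_u(G)$ for every $u$ (using $d_u(G)\ge1$, as $G$ is connected), so $W\le W'\le 4W$ and hence $W/W'\ge 1/4$. Second, only pairs meeting $\{a,b\}$ contribute to $W'-W$, and a direct accounting gives
\[W'-W=\sum_{v\notin\{a,b\}}\bigl(d(f(a),f(v))^2+d(f(b),f(v))^2\bigr)d_v+d(f(a),f(b))^2(d_a+d_b+1);\]
bounding each squared distance by $D_H^2$ and using $\sum_{v\notin\{a,b\}}d_v=\vol{G}-d_a-d_b$ gives $W'-W\le D_H^2(2\vol{G}+1)$, while $W\ge\vol{G}-1$ by Theorem \ref{denomlower} (here the hypotheses $\vol{G}\ge6$ and nonconstancy of $f$ are used). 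Hence $W/W'\ge(\vol{G}-1)/(\vol{G}-1+D_H^2(2\vol{G}+1))$.

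Putting the pieces together: $N'/N\le 1+D_H^2$ and $W/W'\le1$ give $R_f(G',H)\le(1+2/\vol{G})(1+D_H^2)R_f(G,H)$, which after taking minima over $f$ is (\ref{upp}); while $N'/N\ge1$ combined with whichever of the two lower bounds on $W/W'$ is larger gives (\ref{low}). The only step needing genuine care is the bookkeeping for $W'-W$, in particular isolating the $\{a,b\}$ term with its coefficient $d_a+d_b+1$ and fixing the convention for the sum $\sum_{u,v}$; this is an elementary finite computation rather than a real obstacle, so I expect the proof to be short.
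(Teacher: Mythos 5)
Your argument is correct and is essentially the paper's proof: the same factorization into the volume ratio, the numerator ratio bounded between $1$ and $1+D_H^2$, and the two alternative lower bounds on $W/W'$ (one via $W'-W\leq D_H^2(2\VOL(G)+1)$ combined with Theorem \ref{denomlower}, the other via $(d_u+1)(d_v+1)\leq 4d_ud_v$). Your bookkeeping for $W'-W$, including the $(d_a+d_b+1)$ coefficient on the $\{a,b\}$ term, matches the paper's computation, so there is nothing to fix.
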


\begin{proof}
We first consider the upper bound (\ref{upp}). Let $\{a, b\}\in \overline{E(G)}$, with $G'=G\cup\{a, b\}$. For any $f:V(G)\to V(H)$, we have
\[\sum_{u\mtd_{G'} v}d(f(u),f(v))^2 = \sum_{u\mtd_G v}d(f(u),f(v))^2+d(f(a),f(b))^2,\]
and $\vol{G'}=\vol{G}+2$. Hence,
\begin{eqnarray*}
\frac{\vol{G'}\sum_{u\mtd v\in E_{G'}}d(f(u),f(v))^2}{\vol{G}\sum_{u\mtd v\in E_G}d(f(u),f(v))^2} & = & \frac{\vol{G}+2}{\vol{G}}\frac{ \sum_{u\mtd_G v}d(f(u),f(v))^2+d(f(a),f(b))^2}{ \sum_{u\mtd_G v}d(f(u),f(v))^2}\\
& \leq & \left(1+\frac{2}{\vol{G}}\right)\left(1+\frac{D_H^2}{\sum_{u\mtd_G v}d(f(u),f(v))^2}\right)\\
& \leq & \left(1+\frac{2}{\vol{G}}\right)\left(1+{D_H^2}\right)
\end{eqnarray*}
Moreover, for any vertex $v\in V(G)$, we have $d_{G'}(v)\geq d_{G}(v)$, and hence we obtain 
\[\frac{R_f(G', H)}{R_f(G, H)}\leq \left(1+\frac{2}{\vol{G}}\right)\left(1+{D_H^2}\right),\]
yielding the upper bound.

For the lower bound, we will prove the two bounds separately. First, note that
\begin{eqnarray*}
\sum_{u, v}d(f(u), f(v))^2 d_{G'}(u)d_{G'}(v) &=& \sum_{u, v}d(f(u), f(v))^2 d_{G}(u)d_{G}(v)\\&&+\sum_{u}\left[d(f(u), f(a))^2+d(f(u), f(b))^2\right]d_G(u)+d(f(a), f(b))^2\\
& \leq &  \sum_{u, v}d(f(u), f(v))^2 d_{G}(u)d_{G}(v)+D_H^2(2\vol{G}+1)
\end{eqnarray*}
Hence, by Theorem \ref{denomlower}, we obtain
\begin{eqnarray*}
\frac{\sum_{u, v}d(f(u), f(v))^2 d_{G'}(u)d_{G'}(v)}{\sum_{u, v}d(f(u), f(v))^2 d_{G}(u)d_{G}(v)}&  \leq & 1+\frac{D_H^2(2\vol{G}+1)}{\vol{G}-1}\\
& \leq & 1+D_H^2\left(2+\frac{3}{\vol{G}-1}\right)
\end{eqnarray*}
Therefore, we have
\begin{eqnarray*}
\frac{R_f(G, H)}{R_f(G', H)} & \leq & \frac{\vol{G}}{\vol{G}+2}\left( 1+D_H^2\left(2+\frac{3}{\vol{G}-1}\right)\right)\frac{\sum_{u\mtd_{G}v} d(f(u), f(v))^2}{\sum_{u\mtd_{G'}v}d(f(u), f(v))^2}\\
& \leq & \left(\frac{\vol{G}}{\vol{G}+2}\right)\left( 1+D_H^2\left(2+\frac{3}{\vol{G}-1}\right)\right),
\end{eqnarray*}
yielding the first bound.

For the second bound, we simply note that 
\[\frac{\sum_{u,v}d(f(u),f(v))^2d_G(u)d_G(v)}{\sum_{u,v}d(f(u),f(v))^2d_{G'}(u)d_{G'}(v)}\geq \frac{\sum_{u,v}d(f(u),f(v))^2d_G(u)d_G(v)}{\sum_{u,v}d(f(u),f(v))^2(d_G(u)+1)(d_G(v)+1)}\]
and since $\frac{(d_u+1)(d_v+1)}{d_ud_v}\leq 4$, we get that $\frac{R_f(G',H)}{R_f(G,H)}\geq \frac{1}{4}\left(1+\frac{2}{\vol{G}}\right)$.

\end{proof}

We note that there are cases in which each of the two terms in the lower bound is larger, and hence both bounds can be useful. 

From the above theorem, it is unclear whether adding an edge to a graph $G$ will increase or decrease $\lambda(G, X)$. In fact, both possibilities can occur. To illustrate, we turn to the complete multipartite graph $K_{n, j}$. Here, $K_{n,j}$ represents the $j$-partite graph where each partition set $V_1,...,V_j$ has exactly $n$ vertices.

We shall consider the geometric eigenvalue $\lambda(K_{n,j}, K_2)$. Write $v_1 ,v_2$ as the vertices of $K_2$. Note that the only relevant pieces of information to evaluate $R_f(K_{n, j}, K_2)$ for a function $f$ are $|f^{-1}(v_1)\cap V_1|, |f^{-1}(v_1)\cap V_2|,..., |f^{-1}(v_1)\cap V_j|$. Indeed, if we denote these values by $x_i$, then we obtain
\[ R_f(K_{n, j}, K_2) = \frac{n^2j(j-1)\left(x_1(n-x_2)+(n-x_3)+...+(n-x_j)\right) }{n^2(j-1)^2(x_1+...+x_j)(nj-x_1-...-x_j) }= \frac{nj\sum_{i=1}^jx_i-\frac{2j}{j-1}\sum_{i<j}x_ix_j}{nj\sum_{i=1}^jx_i-\left(\sum_{i=1}^jx_j\right)^2}\]

\begin{thm} \label{T:bipartite}$\lambda(K_{n, j}, K_2)=1$ for all $n, j$.
\end{thm}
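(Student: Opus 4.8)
The plan is to show that for every nonconstant $f:V(K_{n,j})\to V(K_2)$ we have $R_f(K_{n,j},K_2)\geq 1$, and then exhibit a function achieving value exactly $1$; combined with the general upper bound $\lambda(G,K_2)\leq\frac{N}{N-1}$ (here $N=nj$), the infimum is pinned at $1$. Actually, since we want exactly $1$, the cleaner route is: prove $R_f\geq 1$ always, and separately produce an $f$ with $R_f=1$.

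First I would work from the closed form for $R_f$ already derived just above the statement,
\[
R_f(K_{n,j},K_2)=\frac{nj\,s-\frac{2j}{j-1}\sum_{i<i'}x_ix_{i'}}{nj\,s-s^2},
\]
where $s=\sum_{i=1}^j x_i$ and $x_i=|f^{-1}(v_1)\cap V_i|\in\{0,1,\dots,n\}$. The claim $R_f\geq 1$ is equivalent (the denominator $nj\,s-s^2=s(nj-s)$ is positive whenever $f$ is nonconstant, since then $0<s<nj$) to
\[
nj\,s-\frac{2j}{j-1}\sum_{i<i'}x_ix_{i'}\ \geq\ nj\,s-s^2,
\]
i.e. to $s^2\geq \frac{2j}{j-1}\sum_{i<i'}x_ix_{i'}$. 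Using $s^2=\sum_i x_i^2+2\sum_{i<i'}x_ix_{i'}$, this rearranges to $\sum_i x_i^2 \geq \left(\frac{2j}{j-1}-2\right)\sum_{i<i'}x_ix_{i'}=\frac{2}{j-1}\sum_{i<i'}x_ix_{i'}$, which is exactly the power-mean / Cauchy–Schwarz inequality $\sum_i x_i^2\geq \frac{1}{j-1}\sum_{i\neq i'}x_ix_{i'}$ (equivalently $j\sum_i x_i^2\geq s^2$). So the lower bound $R_f\geq 1$ is immediate once the algebra is unwound, and equality holds iff all $x_i$ are equal.

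For the matching upper bound, I would simply pick $f$ constant on each part with all $x_i$ equal to a common value $t$ with $0<t<n$ (so $t=1$ works for $n\geq 2$; the degenerate case $n=1$, where $K_{1,j}=K_j$, is handled by Theorem~\ref{T:twographs}-type remarks or directly since then $\lambda(K_j,K_2)=\lambda(K_j, X)=\frac{j}{j-1}$... — in fact one should just note $n\geq 2$ is the interesting case, and for $n=1$ observe $K_{1,j}=K_j$ so the formula still forces the value, or restrict attention accordingly). With $x_i=t$ for all $i$, the equality condition in the inequality above is met, so $R_f=1$, giving $\lambda(K_{n,j},K_2)\leq 1$. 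Together with $R_f\geq 1$ for all nonconstant $f$, we conclude $\lambda(K_{n,j},K_2)=1$.

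The main obstacle is essentially bookkeeping rather than depth: one must double-check the positivity of the denominator (nonconstant $f$ forces $0<s<nj$, but one should also confirm the numerator of the original expression is genuinely $\sum_{u\sim v}d(f(u),f(v))^2$ counting edges of $K_{n,j}$ between the two $f$-classes, which is where the $x_i(n-x_{i'})$ cross-terms come from), and handle small/degenerate parameter values ($n=1$ or $j=1$, the latter being disconnected and excluded, the former reducing to a complete graph). Beyond that, the whole statement collapses to the identity $j\sum x_i^2\geq(\sum x_i)^2$ with its equality case, so I do not expect any real difficulty.
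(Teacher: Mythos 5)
Your proof is correct and is essentially the paper's own argument: both start from the same closed form for $R_f$ in terms of the $x_i$, reduce $R_f\geq 1$ to the inequality $j\sum_i x_i^2\geq\left(\sum_i x_i\right)^2$ (which the paper phrases as $\sum_{i<i'}(x_i-x_{i'})^2\geq 0$ and you phrase as Cauchy--Schwarz), and then take the same test function $x_i=1$ for all $i$ to achieve equality. Your extra attention to denominator positivity and to the degenerate case $n=1$ (where the stated claim actually fails, since $K_{1,j}=K_j$ has $\lambda=j/(j-1)$) is more careful than the paper, which tacitly assumes $n\geq 2$.
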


\begin{proof}
Using notation as above, define a function $f$ such that $x_i=1$ for all $i\in[j]$. Then by the above, $R_f=\frac{nj^2-\frac{2j}{j-1}\frac{j(j-1)}{2}}{nj^2-j^2}=1$. 

On the other hand, note that if $(x_1+...+x_j)^2\geq\frac{j}{j-1}\sum_{i<j}x_ix_j$ for any real numbers $x_1,...,x_j$, then we have that $R_f\geq 1$ for all $f$. Therefore, it must be that the given function $f$ achieves the infimum, and $\lambda(K_{n, j}, K_2)=1$. 
\\So, we only need to show that $(x_1+...+x_j)^2\geq\frac{j}{j-1}\sum_{i<j}x_ix_j$ holds, which by expanding the square is equivalent to demonstrating that $(j-1)x_1^2+...+(j-1)x_j^2- 2\sum_{i<j}x_ix_j\geq 0$. This is true since the left hand side equals $\sum_{i<j}(x_i-x_j)^2$, which is non-negative as it is a sum of squares.

\end{proof}

Using this result, we immediately have the following.

\begin{thm}There exists a pair of graphs $G, G'$, such that $G'$ is obtained from $G$ by adding one edge, and $\lambda(G, K_2)< \lambda(G', K_2)$.
\end{thm}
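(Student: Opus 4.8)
The plan is to use Theorem~\ref{T:bipartite} as a reservoir of graphs whose eigenvalue against $K_2$ is exactly $1$: it suffices to produce a connected graph $G$ with $\lambda(G,K_2)<1$ from which adding a single edge yields a complete multipartite graph $K_{n,j}$ with $n\ge 2$. For then $G'=K_{n,j}$ satisfies $\lambda(G',K_2)=1>\lambda(G,K_2)$ by Theorem~\ref{T:bipartite}, and $G'$ is $G$ plus one edge, which is exactly the claim. Since any $f:V(G)\to V(K_2)$ is determined by the cut $S=f^{-1}(v_1)$ and, in this case, (\ref{E:Rf}) collapses to $R_f(G,K_2)=\vol{G}\,e(S,\bar S)/(\vol{S}\,\vol{\bar S})$, the whole problem reduces to exhibiting, inside $G$, one vertex partition $(S,\bar S)$ with $\vol{G}\,e(S,\bar S)<\vol{S}\,\vol{\bar S}$.

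I would take the smallest instance. Let $G=P_4$, the path $q_1q_2q_3q_4$, and let $G'=C_4=K_{2,2}$ be obtained from $G$ by adding the edge $\{q_1,q_4\}$ (with bipartition $\{q_1,q_3\}$, $\{q_2,q_4\}$); by Theorem~\ref{T:bipartite}, $\lambda(G',K_2)=1$. On $G$, take $S=\{q_1,q_2\}$, i.e.\ the function sending $q_1,q_2$ to one vertex of $K_2$ and $q_3,q_4$ to the other. The only edge of $P_4$ leaving $S$ is $\{q_2,q_3\}$, so $e(S,\bar S)=1$; the degrees along the path are $1,2,2,1$, so $\vol{S}=\vol{\bar S}=3$ and $\vol{G}=6$. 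Hence $R_f(G,K_2)=\tfrac{6\cdot 1}{3\cdot 3}=\tfrac23$, giving $\lambda(G,K_2)\le\tfrac23<1=\lambda(G',K_2)$.

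There is essentially no technical obstacle once Theorem~\ref{T:bipartite} is available; the one point that needs care is the \emph{direction} of the effect, and correspondingly that the deleted-edge graph must come from a complete multipartite graph that is not itself complete. For contrast, deleting an edge from $K_3$ produces $P_3$, but $\lambda(K_3,K_2)=\tfrac32$ (the complete-graph value of $R_f$ from (\ref{E:defgeo})), while every cut of $P_3$ gives $R_f\ge\tfrac43$; there $\lambda$ goes \emph{down} when the edge is added, the opposite of what is wanted. Using $K_{n,j}$ with $n\ge 2$ avoids this because the edge-deleted subgraph has a low-degree vertex adjacent across the missing edge, hence a balanced cut with a single boundary edge. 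If a larger witness is preferred, the same recipe works for $G=K_{n,n}\setminus\{e\}$ with $G'=K_{n,n}$ for any $n\ge 2$: placing the two endpoints of $e$ on opposite sides of a suitably balanced cut again forces $R_f<1$, checked exactly as above, and for $n=2$ this is precisely the $P_4$ example.
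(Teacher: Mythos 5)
Your proof is correct, but it takes a genuinely different route from the paper's. The paper also starts from Theorem \ref{T:bipartite} ($\lambda(K_{n,n},K_2)=1$), but then argues non-constructively: since $\lambda(K_{2n},K_2)=\frac{2n}{2n-1}>1$, adding the nonedges of $K_{n,n}$ one at a time must at some step strictly increase $\lambda$, so \emph{some} pair $(G,G')$ along that sequence works, without identifying it. You instead exhibit an explicit minimal witness: $G=P_4$, $G'=C_4=K_{2,2}$, with the cut $S=\{q_1,q_2\}$ giving $R_f=\vol{G}\,e(S,\bar S)/(\vol{S}\vol{\bar S})=6/9=2/3$, hence $\lambda(P_4,K_2)\leq 2/3<1=\lambda(C_4,K_2)$. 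All of these computations check out (the reduction of $R_f(\cdot,K_2)$ to the cut quantity is valid since $d(f(u),f(v))^2\in\{0,1\}$), so your argument is a strict improvement in explicitness: it names the pair rather than merely proving one exists, at the cost of no extra machinery.

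One side remark in your last paragraph is wrong, though it does not touch the proof: you claim that for $P_3\to K_3$ the eigenvalue ``goes down when the edge is added.'' In fact $\lambda(P_3,K_2)=\tfrac{4}{3}$ (the cuts of $P_3$ give $R_f\in\{\tfrac43,2\}$) while $\lambda(K_3,K_2)=\tfrac32$, so adding the edge \emph{increases} $\lambda$ there as well --- meaning $(P_3,K_3)$ is itself another valid (even smaller) witness for the theorem, and your cautionary contrast about needing a non-complete multipartite target is unnecessary.
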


\begin{proof}
Let $G_1=K_{n, n}$, so as seen above, $\lambda(G_1, K_2)=1$. Moreover, $\lambda(K_n, K_2)=\frac{n}{n-1}>1$, and hence if we add the nonedges of $G_1$ sequentially, we will encounter a pair of graphs satisfying the condition.
\end{proof}

\begin{thm}
There exists a pair of graphs $G, G'$, such that $G'$ is obtained from $G$ by adding one edge, and $\lambda(G, K_2)> \lambda(G', K_2)$.
\end{thm}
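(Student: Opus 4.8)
The plan is to exhibit an explicit pair $G \subset G'$ with $G' = G \cup \{e\}$ for which $\lambda(G', K_2) < \lambda(G, K_2)$, mirroring the dumbbell-style constructions used earlier in the paper. The natural candidate is a graph that is ``almost'' two cliques joined narrowly: take $G$ to be two copies of $K_{n/2}$ joined by a single edge (the dumbbell graph already considered in the examples), and let $G'$ be the same graph with one additional edge across the cut, so that the two lobes are now joined by two independent edges. Intuitively, adding a second bridging edge makes $G'$ ``more connected'' across the bottleneck, and the natural test function $f$ that separates the two lobes now pays a penalty of $2$ rather than $1$ in the numerator while the denominator is essentially unchanged; since $\lambda$ is an infimum over such functions, this should drive $\lambda(G', K_2)$ strictly below $\lambda(G, K_2)$.

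Concretely, I would first compute $R_f(G, K_2)$ and $R_f(G', K_2)$ for the two-lobe test function $f$ (mapping each lobe to a distinct vertex of $K_2$) exactly as in the dumbbell example: the numerator for $G$ is $\vol{G}\cdot 1$ and for $G'$ is $\vol{G'} \cdot 2$, while the denominators differ only through the degree changes at the (at most four) affected vertices and are comparable up to lower-order terms. This gives an explicit upper bound on $\lambda(G', K_2)$. Next I would produce a matching lower bound on $\lambda(G, K_2)$, either by invoking Theorem~\ref{naivelower} / the corollary to Theorem~\ref{T:lowerbound}, or — cleaner — by arguing directly that for the single-bridge dumbbell any nonconstant $f$ has $R_f(G, K_2)$ bounded below by a quantity strictly exceeding the computed value of $R_f(G', K_2)$ for large $n$. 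Comparing the two numerical expressions and checking the inequality holds for all sufficiently large $n$ (indeed both are of order $n/\vol{G} \sim 1/n$, but with the ``$2$'' penalty the constant is strictly worse for $G'$) completes the argument; it then suffices to fix one such $n$.

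An alternative, and perhaps slicker, route avoids any delicate estimates: start from $G_0 = K_n$ and delete the nonedges of a dumbbell one at a time to reach the single-bridge dumbbell $G_{\text{end}}$. We know $\lambda(K_n, K_2) = \tfrac{n}{n-1} > 1$, and by the examples in the paper $\lambda(G_{\text{end}}, K_2)$ is of order $1/n^2$, hence strictly smaller. Therefore somewhere along this sequence of single-edge additions (read in reverse) there is a consecutive pair $G, G'$ with $\lambda(G, K_2) > \lambda(G', K_2)$, which is exactly the claim. This is the same device used in the preceding theorem, just run in the opposite direction, so it requires essentially no new computation.

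The main obstacle is making the lower bound on $\lambda(G, K_2)$ for the chosen $G$ precise enough to beat the upper bound on $\lambda(G', K_2)$ if one insists on the explicit construction — one must be careful that the generic lower bounds from Theorem~\ref{naivelower} are of the same order $1/n^2$ as the test-function upper bounds, so the comparison hinges on constants and on checking that the optimal $f$ for the single-bridge dumbbell really is (asymptotically) the lobe-separating one. If that comparison of constants turns out to be too tight, the telescoping argument from the previous theorem sidesteps the issue entirely and is the safer choice to present.
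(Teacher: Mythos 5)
Both routes you propose have a directional error that cannot be patched. For the explicit construction: adding a second bridge to the dumbbell makes the lobe-separating test function $f$ \emph{worse}, not better --- its numerator roughly doubles while its denominator is essentially unchanged, so $R_f(G',K_2)\approx 2\,R_f(G,K_2)$. That gives an \emph{upper} bound on $\lambda(G',K_2)$ of about $16/n^2$, whereas any lower bound on $\lambda(G,K_2)$ is at most $\lambda(G,K_2)\le R_f(G,K_2)\approx 8/n^2$; the inequality you need, namely (lower bound on $\lambda(G)$) strictly greater than (upper bound on $\lambda(G')$), would force $8/n^2>16/n^2$. So this is not a matter of the constants being tight: they provably go the wrong way, and indeed for the two-bridge dumbbell the lobe cut is still asymptotically optimal, so $\lambda$ \emph{increases} under this edge addition.

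The telescoping route fails for the same reason run backwards. A chain $G_0\subset G_1\subset\cdots\subset G_k$ of single-edge additions forces some step with $\lambda(G_i,K_2)>\lambda(G_{i+1},K_2)$ only if $\lambda(G_0,K_2)>\lambda(G_k,K_2)$. Your chain goes from the dumbbell ($\lambda\approx 8/n^2$) up to $K_n$ ($\lambda=\tfrac{n}{n-1}$), so the total change is positive and pigeonhole only yields a step where $\lambda$ \emph{increases} --- that is the preceding theorem, not this one; and since $K_n$ maximizes $\lambda$ among graphs on $n$ vertices, no chain ending at $K_n$ can work. To make either approach succeed you need a starting graph whose $\lambda$ is known exactly (or from below) at a value that some one-edge supergraph can beat. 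The paper does exactly this: it takes $G=K_{n,n}$, where $\lambda(K_{n,n},K_2)=1$ exactly (Theorem \ref{T:bipartite}), adds one edge $xy$ inside a part, and exhibits an explicit $f$ (sending one vertex of each part, neither equal to $x$ or $y$, to one vertex of $K_2$ and everything else to the other) with $R_f=\frac{n^3-n^2+n-1}{n^3-n^2+n}<1$. That exact evaluation of $\lambda$ for the base graph is the ingredient your proposal is missing.
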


\begin{proof}
Let $G=K_{n, n}$, with bipartition sets $V_1$ and $V_2$, as above, and $n\geq 3$. Let $x, y\in V_1$, and let $G' = G\cup\{xy\}$. Define a function $f:V(G')\to V(K_2)$ as follows. Let $z_1\in V_1$, with $z_1\neq x, y$, and $z_2\in V_2$. Let $f(z_1)=f(z_2)=v_1$, and $f(v)=v_2$ for all other $v\in V(G)$. Then we have
\begin{eqnarray*}
R_f & = & \frac{((2n-2)n+2(n+1))(2(n-1))}{n^2(2(n-3+n-1))+n(n+1)(4)}\\
& = & \frac{(n^2+1)(n-1)}{n(n^2-n+1)}\\
& = & \frac{n^3-n^2+n-1}{n^3-n^2+n}<1.
\end{eqnarray*}
Hence, $\lambda(G', K_2)<1=\lambda(G, K_2)$.
\end{proof}

The two results above suffice to show that $\lambda(G, H)$ is not monotone in $G$ when $H=K_2$. However, the proof also works for $\lambda(G,\mathbb{R})$ (instead of mapping into $K_2$, map to the real numbers 0 and 1).
\begin{lemma}
$\lambda(K_{n,n},K_k)= 1$ for all $n,k$.
\end{lemma}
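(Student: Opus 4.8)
The plan is to prove the two bounds $\lambda(K_{n,n},K_k)\le 1$ and $\lambda(K_{n,n},K_k)\ge 1$ separately, working throughout with $n\ge 2$ and $k\ge 2$ (the range in which the quantity is defined and the identity holds --- note that $K_{1,1}=K_2$ gives $\lambda=\tfrac{2}{1}=2$, and $K_1$ is not an admissible metric space, so some hypothesis on $n$ is needed for the statement as written).

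For the upper bound I would observe that $K_2$ is a connected subgraph of $K_k$, so Theorem~\ref{T:ktwoextreme} gives $\lambda(K_{n,n},K_k)\le\lambda(K_{n,n},K_2)$; since $K_{n,n}=K_{n,2}$ in the multipartite notation, Theorem~\ref{T:bipartite} with $j=2$ yields $\lambda(K_{n,n},K_2)=1$, and hence $\lambda(K_{n,n},K_k)\le 1$. (Self-containedly, one can instead exhibit an explicit $f$: label the two parts $V_1,V_2$ and pick $f:V(K_{n,n})\to V(K_k)$ so that $|f^{-1}(i)\cap V_1|=|f^{-1}(i)\cap V_2|$ for every $i$; this is non-constant whenever $n\ge 2$, and the computation below shows $R_f=1$.)

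For the lower bound I would fix an arbitrary non-constant $f:V(K_{n,n})\to V(K_k)$ and record the only data that matters: $a_i=|f^{-1}(i)\cap V_1|$ and $b_i=|f^{-1}(i)\cap V_2|$, so that $\sum_i a_i=\sum_i b_i=n$. Because $K_{n,n}$ is $n$-regular with $\vol{K_{n,n}}=2n^2$, and because in $K_k$ we have $d(x,y)^2=1$ for $x\ne y$ and $0$ otherwise, $R_f$ collapses to a ratio of two integer counts: the number of edges of $K_{n,n}$ with differently labelled endpoints, namely $n^2-\sum_i a_ib_i$, and the number of unordered vertex pairs with differently labelled endpoints, namely $\binom{2n}{2}-\sum_i\binom{a_i+b_i}{2}$. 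Thus
\[
R_f=\frac{2n^2\bigl(n^2-\sum_i a_ib_i\bigr)}{n^2\Bigl(\binom{2n}{2}-\sum_i\binom{a_i+b_i}{2}\Bigr)} .
\]
Expanding $\binom{a_i+b_i}{2}$ and using $\sum_i(a_i+b_i)=2n$, the claim $R_f\ge 1$ reduces after routine algebra to $\sum_i a_i^2+\sum_i b_i^2\ge 2\sum_i a_ib_i$, i.e.\ to $\sum_i(a_i-b_i)^2\ge 0$, which holds trivially (with equality exactly for the maps used in the upper bound). Taking the infimum over $f$ gives $\lambda(K_{n,n},K_k)\ge 1$.

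I do not expect a genuine obstacle; the argument is bookkeeping once one notices that embedding into $K_k$ only sees the colour classes. The points to be careful about are the convention that $\sum_{u,v}$ ranges over unordered pairs (so the denominator count is $\binom{2n}{2}-\sum_i\binom{a_i+b_i}{2}$ rather than twice that), checking the exhibited map is non-constant, and excluding the degenerate values $n=1$ and $k=1$. The one substantive step is the collapse of $R_f\ge 1$ to a sum of squares, and it is a two-line computation.
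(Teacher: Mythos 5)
Your proof is correct, and it follows the same overall architecture as the paper's: the upper bound comes from $\lambda(K_{n,n},K_k)\le\lambda(K_{n,n},K_2)=1$ via Theorem \ref{T:ktwoextreme} and Theorem \ref{T:bipartite}, and the lower bound comes from parametrizing an arbitrary $f$ by the colour-class sizes $a_i=|f^{-1}(i)\cap V_1|$, $b_i=|f^{-1}(i)\cap V_2|$ and computing $R_f$ directly. The one substantive difference is in how the final inequality is closed, and here your version is actually sounder than the paper's. The paper argues via the asymmetric claim $\|x\|_2^2\ge x^\top y$ for nonnegative vectors with $\|x\|_1=\|y\|_1=n$, justified by an optimization heuristic; that claim is false in general (take $x=(3,1)$, $y=(4,0)$: $\|x\|_2^2=10<12=x^\top y$), and the paper's denominator bookkeeping also tacitly symmetrizes the within-$V_1$ and within-$V_2$ pair counts. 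Your symmetric reduction to $\sum_i a_i^2+\sum_i b_i^2\ge 2\sum_i a_ib_i$, i.e.\ $\sum_i(a_i-b_i)^2\ge 0$, is exactly the inequality that is actually needed and is trivially true, so your write-up repairs the argument rather than merely reproducing it. Your caveat that $n\ge 2$ is required is also a fair catch: $K_{1,1}=K_2$ gives $\lambda=2$, and the equality-achieving map with $a_i=b_i$ is only non-constant for $n\ge 2$, a hypothesis the lemma as stated omits. The observation that the ordered-versus-unordered pair convention cancels from $R_f$ is correct, so that worry is harmless.
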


\begin{proof}
Note that by Theorem \ref{T:bipartite} and Corollary \ref{C:complete}, it suffices to show that $\lambda(K_{n, n}, K_k)\geq 1$ for all $n,k$.

For all $i\in [k]$, and for a function $f:V(G)\rightarrow K_k$, define $x_i=|f^{-1}(v_i)\cup V_1|$ and $y_i=|f^{-1}(v_i)\cup V_2|$. We consider $x$ and $y$ to be the vectors of these numbers, and we know that $||x||_1=||y||_1=n$. For a fixed $x$, $||x||_2^2-x^Ty$ is minimized when $y$ is a multiple of $x$. Since $y$ cannot equal $cx$ for any $c\neq 1$ (due to the 1-norm constraint on $y$), we know that $||x||_2^2-x^Ty$ is minimized when $y=x$, and this is lower bounded by 0. 
\\Thus, we have:
\[0\leq x_1^2+...+x_k^2-x_1y_1-...-x_ky_k\]
\[=x_1(n-y_1)+...+x_k(n-y_k)-x_1(n-x_1)-...-x_k(n-x_k)\]
\[=x_1(y_2+...+y_k)+...+x_k(y_1+...+y_{k-1})-x_1(x_2+...+x_k)-...-x_k(x_1+...+x_k)\]
Then, we have that:
\[\beta :=\frac{x_1(y_2+...+y_k)+...+x_k(y_1+...+y_{k-1})}{x_1(x_2+...+x_k)+...+x_k(x_1+...+x_k)}\geq 1\]
For any $f:V(G)\rightarrow K_k$, we have:
\[R_f=2\frac{x_1(y_2+...+y_k)+...+x_k(y_1+...+y_{k-1})}{x_1(y_2+...+y_k)+...+x_k(y_1+...+y_{k-1})+x_1(x_2+...+x_k)+...+x_k(x_1+...+x_k)}\]
\[\implies R_f^{-1}=\frac{1}{2}\left(1+\frac{1}{\beta}\right)\leq 1\implies R_f^{-1}\leq 1\implies R_f\geq 1\]
\end{proof}
So now we can extend our proof of non-monotonicity to $H=K_k$. 
\begin{cor}
$\lambda(G,H)$ is not monotonic in adding an edge to $G$, whenever $H$ is a complete graph. 
\end{cor}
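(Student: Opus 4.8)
The plan is to assemble the corollary from pieces already established, handling the ``increasing'' and ``decreasing'' directions separately in direct parallel with what was done for $H=K_2$. The two inputs I would use are the preceding lemma, $\lambda(K_{n,n},K_k)=1$ for all $n,k$, and the fact observed following Theorem~\ref{T:lowerbound} that $\lambda(K_m,X)=\frac{m}{m-1}$ for every metric space $X$ with at least two points, so in particular $\lambda(K_m,K_k)=\frac{m}{m-1}>1$ for $m\geq 2$. Throughout I take $H=K_k$ with $k\geq 2$ fixed (the case $k=1$ being degenerate).

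For the increasing direction, I would take $n\geq 2$ and set $G_1=K_{n,n}$, so $\lambda(G_1,K_k)=1$ by the lemma, while $\lambda(K_{2n},K_k)=\frac{2n}{2n-1}>1$. Since $G_1$ is a spanning subgraph of $K_{2n}$, adding the missing edges one at a time gives a chain $G_1\subset G_2\subset\cdots\subset K_{2n}$ along which $\lambda(\cdot,K_k)$ rises strictly from $1$ to $\frac{2n}{2n-1}$; hence there is some step $G_i\to G_{i+1}$ at which a single edge addition strictly increases $\lambda(\cdot,K_k)$.

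For the decreasing direction, I would recycle the explicit construction used to show $\lambda(G,K_2)>\lambda(G',K_2)$ can occur. Take $n\geq 3$, let $G=K_{n,n}$ with parts $V_1,V_2$, pick $x,y\in V_1$, and set $G'=G\cup\{xy\}$. Define $f:V(G')\to V(K_k)$ by sending all vertices to one fixed vertex $v_1$ of $K_k$ except $z_1\in V_1\setminus\{x,y\}$ and $z_2\in V_2$, which map to a second vertex $v_2$. The key observation is that since every pair of distinct vertices of $K_k$ is at distance $1$, a function taking only two values in $K_k$ produces exactly the same $R_f$ as the corresponding two-valued function into $K_2$; thus $R_f(G',K_k)=R_f(G',K_2)=\frac{n^3-n^2+n-1}{n^3-n^2+n}<1$, whereas $\lambda(G,K_k)=\lambda(K_{n,n},K_k)=1$ by the lemma. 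Hence $\lambda(G',K_k)\leq R_f(G',K_k)<1=\lambda(G,K_k)$.

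Combining the two directions gives non-monotonicity of $\lambda(\cdot,K_k)$ under edge addition for every complete graph. The only step requiring any care is the claim that restricting the image of $f$ to two vertices of $K_k$ reproduces the $K_2$ computation verbatim, which is the closest thing here to an obstacle; but it is immediate from the definition of the graph metric, so in truth the corollary is just bookkeeping layered on top of the lemma.
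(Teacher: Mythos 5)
Your proof is correct and follows essentially the same route as the paper: combine the lemma $\lambda(K_{n,n},K_k)=1$ with the earlier edge-addition examples, getting the increase from the chain $K_{n,n}\subset\cdots\subset K_{2n}$ and the decrease from the single-edge construction on $K_{n,n}$. The only cosmetic difference is that for the decrease the paper simply invokes $\lambda(G',K_k)\leq\lambda(G',K_2)<1$ via Corollary \ref{C:Ktwo}, whereas you re-verify directly that a two-valued map into $K_k$ reproduces the $K_2$ computation; both are valid and amount to the same observation.
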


\begin{proof}
By lemma 6.8 and corollary 6.6, we know that $\lambda(K_{n,n},K_k)=1$. But then the same example as in theorem 6.4 also works to disprove monotonicity in the generalized case, as $\lambda(G',K_k)\leq\lambda(G',K_2)<1$. 
\end{proof}
We note that a graph being bipartite is not equivalent to $\lambda(G,H)$ equaling 1. Of course, a disconnected graph can be bipartite, but $\lambda$ will equal 0. Similarly, if one wants a connected counterexample, it is easy to check that $\lambda(P_3,K_2)=\frac{4}{3}$, where $P_3$ is the path on three vertices. Conversely, our computer simulation tells us that $\lambda(G,K_2)=1$ where $G$ is the nonbipartite graph that is formed by taking $K_4$ and deleting two edges that touch a common vertex. 
\\We have established that adding an edge to $G$ can decrease $\lambda(G,H)$, despite the improvement in connectivity. In fact, we have something stronger. If we have a $k$-regular graph, we can add enough edges to it so as to make it $k+1$-regular and still have a decrease in lambda. A variant of this has been considered in \cite{mendel2013nonlinear}, and a similar bound developed.

\begin{thm}
Let $G'$ be a $k+1$-regular supergraph of $k$-regular graph $G$. Then, \[\frac{k}{k+1}\leq\frac{\lambda(G',H)}{\lambda(G,H)}\leq\frac{k}{k+1}\left(1+\frac{nD_H^2}{2}\right)\]
\end{thm}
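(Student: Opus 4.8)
The plan is to compare $R_f(G',H)$ with $R_f(G,H)$ for each fixed nonconstant function $f:V(G)\to V(H)$ (constant functions give $0/0$ and are excluded from the infimum), and then take the infimum over $f$. The whole argument rests on the fact that $G$ and $G'$ share the vertex set $V$, with $|V|=n$, and are both regular, which forces the two denominators of $R_f$ to be proportional to the \emph{same} sum.

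First I would record the elementary consequences of regularity: $\vol{G}=nk$, $\vol{G'}=n(k+1)$, and, since $|E(G)|=nk/2$ and $|E(G')|=n(k+1)/2$ with $E(G)\subseteq E(G')$, the set $E(G')\setminus E(G)$ has exactly $n/2$ edges. Because every degree in $G$ is $k$, the denominator of $R_f(G,H)$ equals $k^2\sum_{u,v}d(f(u),f(v))^2$, and likewise the denominator of $R_f(G',H)$ equals $(k+1)^2\sum_{u,v}d(f(u),f(v))^2$. Hence the denominator ratio is $(k+1)^2/k^2$, the volume ratio $\vol{G'}/\vol{G}$ is $(k+1)/k$, and these combine to give
\[\frac{R_f(G',H)}{R_f(G,H)} \;=\; \frac{k}{k+1}\cdot\frac{\sum_{u\mtd_{G'}v}d(f(u),f(v))^2}{\sum_{u\mtd_{G}v}d(f(u),f(v))^2}.\]

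It then remains to bound the surviving numerator ratio. Writing $\sum_{u\mtd_{G'}v}d(f(u),f(v))^2=\sum_{u\mtd_{G}v}d(f(u),f(v))^2+\sum_{uv\in E(G')\setminus E(G)}d(f(u),f(v))^2$, the trivial bound that the last sum is nonnegative gives numerator ratio $\ge 1$, so $R_f(G',H)\ge\frac{k}{k+1}R_f(G,H)\ge\frac{k}{k+1}\lambda(G,H)$ for every nonconstant $f$, and taking the infimum over $f$ yields the lower bound $\lambda(G',H)\ge\frac{k}{k+1}\lambda(G,H)$. For the upper bound I would bound each of the $n/2$ terms over the new edges by $D_H^2$ and use $\sum_{u\mtd_{G}v}d(f(u),f(v))^2\ge 1$ (valid because $G$ is connected — forced by $\lambda(G,H)>0$ — and $f$ is nonconstant); this gives numerator ratio $\le 1+\frac{nD_H^2}{2}$, hence $R_f(G',H)\le\frac{k}{k+1}\bigl(1+\frac{nD_H^2}{2}\bigr)R_f(G,H)$, and taking the infimum over $f$ gives $\lambda(G',H)\le\frac{k}{k+1}\bigl(1+\frac{nD_H^2}{2}\bigr)\lambda(G,H)$.

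The argument is essentially bookkeeping, so there is no deep obstacle; the one point that must not be glossed over is the exact cancellation enabled by regularity (both denominators equal a common constant times $\sum_{u,v}d(f(u),f(v))^2$, so the factors $k^2$, $(k+1)^2$ and the volumes cancel to leave precisely $k/(k+1)$) together with the exact count $|E(G')\setminus E(G)|=n/2$. The only genuine caveat is well-definedness: one restricts to nonconstant $f$, and implicitly assumes $G$ is connected so that $\lambda(G,H)>0$ and $\sum_{u\mtd_{G}v}d(f(u),f(v))^2\ge 1$; if $G$ were disconnected the claimed ratio would be degenerate.
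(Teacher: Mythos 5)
Your proof is correct and follows essentially the same route as the paper: regularity cancels the denominators and volumes down to the factor $k/(k+1)$, and the numerator ratio is squeezed between $1$ and $1+\frac{nD_H^2}{2}$ using $|E(G')\setminus E(G)|=n/2$ and the bound $D_H^2$ per new edge. Your explicit remarks that $\sum_{u\sim_G v}d(f(u),f(v))^2\geq 1$ requires $G$ connected and $f$ nonconstant are caveats the paper leaves implicit, but they do not change the argument.
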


\begin{proof}
First, note that \[\frac{R_f(G',H)}{R_f(G,H)}=\frac{\vol{G'}k^2\sum_{u\mtd v\in G'}d'(f(u),f(v))^2\sum_{u,v}d(f(u),f(v))^2}{\vol{G}(k+1)^2\sum_{u\mtd v\in G}d(f(u),f(v))^2\sum_{u,v}d'(f(u),f(v))^2}=\frac{k}{k+1}\frac{\sum_{u\mtd v\in G'}d'(f(u),f(v))^2}{\sum_{u\mtd v\in G}d(f(u),f(v))^2}\]
For the lower and upper bounds, we use:
\[1\leq\frac{\sum_{u\mtd v\in G'}d(f(u),f(v))^2}{\sum_{u\mtd v\in G}d(f(u),f(v))^2}\leq\frac{\sum_{u\mtd v\in G}d(f(u),f(v))^2+\sum_{u\mtd v\in E(G')-E(G)}d(f(u),f(v))^2}{\sum_{u\mtd v\in G}d(f(u),f(v))^2}\leq 1+\frac{nD_H^2}{2}\]
\end{proof}

\begin{exa}
For $n$ even, let $G=K_{n,n}$, the complete, balanced, bipartite graph on 2n vertices. We already know that this is a $n$-regular graph with $\lambda(G,K_2)=1$. Denote the vertices on the left partition as $v_1,v_2,...,v_n$ and the vertices on the right as $v_{n+1},...,v_{2n}$. Then, create $G'$ by adding the edges $v_1\mtd v_2$, $v_3\mtd v_4,...,v_{n-1}\mtd v_n,v_{n+1}\mtd v_{n+2},...,v_{2n-1}\mtd v_{2n}$. Then, $G'$ is $n+1$-regular and applying the function that maps $v_1,v_2,v_{n+1},v_{n+2}$ to one vertex in $K_2$ and the other vertices to the other vertex in $K_2$, we obtain that $\lambda(G',K_2)\leq\frac{n}{n+1}$, which meets the lower bound in theorem 6.10.
\end{exa}
Perhaps a natural question to ask at this point is how many of the nonedges can be added to a graph so that $\lambda(G,K_2)$ still decreases. We consider this problem by trying to maximize the number of added edges divided by the total number of nonedges in $G$, for some graph family. One goal is to create a graph family so that for each member of the family, some constant fraction of the edges can be added in such a way that ambda decreases. The following is the closest we could come to that.

\begin{thm}
There exists a graph family $\{\mathcal{G}_n\}$ such that for each $G\in \{\mathcal{G}_n\}$, there exists a graph $G'$ such that $\lambda(G',K_2)<\lambda(G,K_2)$ and $G'$ is obtained from $G$ by adding a set of edges to $G$ of size $O\left(\frac{1}{n^{\epsilon}}|\overline{E}|\right)$, for any $\epsilon>0$.
\end{thm}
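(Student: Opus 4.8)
The plan is to let $\{\mathcal G_n\}$ be a family of complete bipartite graphs — whose $K_2$-eigenvalue is already pinned down at $1$ by Theorem~\ref{T:bipartite} — and then to build $G'$ by adding a prescribed batch of edges inside the two sides while \emph{reserving} one vertex on each side; a single explicit embedding into $K_2$ will then witness $\lambda(G',K_2)<\lambda(G,K_2)$, and the batch size is free to be chosen as $\Theta\!\left(n^{-\epsilon}|\overline{E(G)}|\right)$.

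Concretely, fix $\epsilon>0$ and take $\mathcal G_n=K_{n,n}$, with parts $V_1,V_2$ of size $n$. By Theorem~\ref{T:bipartite}, $\lambda(\mathcal G_n,K_2)=1$, and $|\overline{E(\mathcal G_n)}|=2\binom{n}{2}=\Theta(n^2)$. Fix a vertex $x\in V_1$ and a vertex $y\in V_2$, and let $G'=\mathcal G_n\cup F$, where $F$ is any nonempty set of edges each of whose endpoints lies in $(V_1\setminus\{x\})\cup(V_2\setminus\{y\})$, so that $d_x(G')=d_y(G')=n$ are untouched; such an $F$ may have any size with $1\le|F|\le 2\binom{n-1}{2}$, and I would take $|F|=\ceil{n^{2-\epsilon}}$, which is legal for all large $n$ and satisfies $|F|=\Theta\!\left(n^{-\epsilon}|\overline{E(\mathcal G_n)}|\right)$. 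Define $f:V(G')\to V(K_2)$ by sending $x$ and $y$ to one vertex of $K_2$ and every other vertex of $G'$ to the other.

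To finish, one evaluates $R_f(G',K_2)$ from (\ref{E:Rf}). Since $x,y$ are incident to no edge of $F$, the edges of $G'$ whose endpoints receive different $f$-values are precisely those joining $\{x,y\}$ to the rest, and there are $2(n-1)$ of them — the same count as in $\mathcal G_n$; meanwhile $\vol{\{x,y\}}=2n$ is unchanged while $\vol{G'}=2n^2+2|F|$. Substituting,
\[
R_f(G',K_2)=\frac{(2n^2+2|F|)\cdot 2(n-1)}{2n\bigl(2n^2+2|F|-2n\bigr)}=\frac{(n^2+|F|)(n-1)}{n\bigl(n^2+|F|-n\bigr)},
\]
and clearing denominators shows this is strictly less than $1$ exactly when $|F|\ge 1$ (it equals $1$ at $|F|=0$, recovering $\lambda(\mathcal G_n,K_2)=1$). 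Hence $\lambda(G',K_2)\le R_f(G',K_2)<1=\lambda(\mathcal G_n,K_2)$, and since $|F|=\Theta\!\left(n^{-\epsilon}|\overline{E(\mathcal G_n)}|\right)$, the theorem follows.

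The one genuinely non-routine point is the choice of $f$: the natural candidates — the bipartition map, or the indicator of a minimum-degree vertex — do \emph{not} decrease $\lambda$, which is precisely why adding a constant fraction of edges is hard to arrange directly. The mechanism (already visible in the single-edge example preceding this theorem) is that every endpoint of a newly added edge should be forced into the \emph{large} $f$-class, where it only inflates $\vol{G'}$ and the large-class volume — and hence the denominator of $R_f$ — without contributing any edge to the numerator; reserving $x$ and $y$ for the small class is exactly what makes the new edges invisible to the numerator. Beyond this observation the argument is pure bookkeeping, so I anticipate no real obstacle; in fact the same inequality permits $|F|$ as large as $(n-1)(n-2)=(1-o(1))|\overline{E(\mathcal G_n)}|$, so one could even add all but a vanishing fraction of the non-edges and still strictly decrease $\lambda(\cdot,K_2)$.
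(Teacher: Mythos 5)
Your proof is correct. It shares the paper's starting point---take $\mathcal G_n=K_{n,n}$, where $\lambda(K_{n,n},K_2)=1$ by Theorem \ref{T:bipartite}, add edges inside the two parts, and exhibit a single explicit map to $K_2$ with $R_f<1$---but the mechanism is genuinely different. The paper plants a clique on a set of $n^{1-\epsilon/2}$ ``red'' vertices in each part and sends the red set to one vertex of $K_2$, so the new edges live inside the small $f$-class and inflate the degrees of the very vertices that define the cut; the decrease is then extracted from an asymptotic estimate valid for large $n$. You instead reserve one vertex per side, place the new edges entirely inside the complement of the reserved pair, and send only $\{x,y\}$ to one vertex of $K_2$; the new edges are then invisible both to the numerator's edge sum (still $2(n-1)$ cut edges) and to $d_x,d_y$, so $R_f$ collapses to the exact expression $(n^2+|F|)(n-1)/\bigl(n(n^2+|F|-n)\bigr)$, which is $<1$ precisely when $|F|\geq 1$. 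This buys two things. First, the argument is exact rather than asymptotic, with no ``small enough $\epsilon$'' caveat. Second, and more interestingly, since the inequality only needs $|F|\geq 1$, you may take $|F|$ as large as $2\binom{n-1}{2}=(1-o(1))|\overline{E}|$: a $(1-o(1))$ fraction of all non-edges of $K_{n,n}$ can be added while still strictly decreasing $\lambda(\cdot,K_2)$. That is strictly stronger than the stated theorem and in fact surpasses the ``constant fraction'' goal that the paper explicitly says its construction only approximates, so this observation is worth recording alongside (or in place of) the $O(n^{-\epsilon}|\overline{E}|)$ statement.
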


\begin{proof}
The construction is to take a complete, balanced, bipartite graph $G=K_{n, n}$, so that we have $\lambda(G,K_2)=1$. On each side of the partition $V_1$, $V_2$, let there be a set of $n^{1-\epsilon/2}$ red vertices and a set of $n-n^{1-\epsilon/2}$ blue vertices. Create $G'$ by adding edges to $G$ so that the red vertices in $V_1$ form a clique and the red vertices in $V_2$ form another clique. 
\\We have added $n^{1-\epsilon/2}(n^{1-\epsilon/2}-1)$ edges. Thus, we have added $\frac{n^{1-\epsilon/2}(n^{1-\epsilon/2}-1)}{n(n-1)}=O\left(\frac{1}{n^{\epsilon}}|\overline{E}|\right)$ of the missing edges.
\\Fix a function $f:V(G)\rightarrow K_2$, where $f$ maps red vertices to one vertex and blue vertices to the other. All that remains is to see that $R_f<1$. We have:
\begin{eqnarray*}R_f&=&\frac{\vol{G'}\sum_{u\mtd v}d(f(u),f(v))^2}{\sum_{u,v}d(f(u),f(v))^2d_ud_v}\\&=&\frac{4(n^2+n^{1-\epsilon/2}(n-n^{1-\epsilon/2}))n^{1-\epsilon/2}(n-n^{1-\epsilon/2})}{4n^{1-\epsilon/2}(n-n^{1-\epsilon/2})n(n+n^{1-\epsilon/2}-1)}\\
&=&\frac{n^{4-\epsilon/2}-n^{4-\epsilon}+n^{4-3\epsilon/2}-n^{4-2\epsilon}-n^{3-\epsilon}+n^{3-3\epsilon/2}  }{n^{4-\epsilon/2}-n^{4-3\epsilon/2}-n^{3-\epsilon/2}+n^{3-\epsilon}}\\
& \sim & \frac{n^{3\eps/2}-n^\eps+n^{\eps/2}-1}{n^{3\eps/2}-n^{\eps/2}}<1\end{eqnarray*}
for small enough $\epsilon$ and large enough $n$.
\end{proof}

Finally, recall from Theorem \ref{T:mainone} that $\lambda(G, H)<\lambda(K_n, H) = \frac{n}{n-1}$ for all non-complete graphs $G$ on $n$ vertices. We end with an examination of the ``nearly'' complete graph $K_n\ba\{e\}$, a graph with exactly one nonadjacent pair of vertices.

\begin{thm}
For $n\geq 3$, $\lambda(K_n\ba\{e\},K_2)\geq 1$.  
\end{thm}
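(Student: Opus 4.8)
The plan is to reduce the claim to an elementary inequality over bipartitions of $V(G)$ and then verify it by a short three-case check. Since $K_2$ has only two vertices $v_1,v_2$, any $f:V(G)\to V(K_2)$ is determined by the set $S=f^{-1}(v_1)$, and with $T=V(G)\ba S$ the function $f$ is nonconstant exactly when $\emptyset\neq S\neq V(G)$ (a constant $f$ leaves $R_f$ undefined), so it suffices to prove $R_f\geq 1$ for every such $S$. For a $K_2$-valued $f$ one has $d(f(u),f(v))^2=1$ when exactly one of $u,v$ lies in $S$ and $0$ otherwise, so $\sum_{u\mtd v}d(f(u),f(v))^2$ is a fixed multiple of $e(S,T)$ and $\sum_{u,v}d(f(u),f(v))^2d_ud_v$ is that same multiple of $\vol{S}\vol{T}$. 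Hence $R_f=\frac{\vol{G}\,e(S,T)}{\vol{S}\vol{T}}$, and the goal becomes to show $\vol{G}\,e(S,T)\geq \vol{S}\vol{T}$ for every nonconstant $f$.

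Next I would set up coordinates. Let $\{a,b\}$ be the unique nonedge of $G=K_n\ba\{e\}$; then $d_a=d_b=n-2$ and every other vertex has degree $n-1$, so $\vol{G}=n(n-1)-2$. Writing $s=|S|$ and $\alpha=|S\cap\{a,b\}|\in\{0,1,2\}$, one gets $\vol{S}=s(n-1)-\alpha$, $\vol{T}=(n-s)(n-1)-(2-\alpha)$, and $e(S,T)=s(n-s)$ when $\alpha\in\{0,2\}$ while $e(S,T)=s(n-s)-1$ when $\alpha=1$ (the missing edge is cut precisely when $a$ and $b$ are separated by $S$). Substituting and using $n(n-1)-(n-1)^2=n-1$, the quantity $\vol{G}\,e(S,T)-\vol{S}\vol{T}$ collapses to $s(n-s)(n-3)+2(n-1)s$ for $\alpha=0$, to $s(n-s)(n-3)+1$ for $\alpha=1$, and to $s(n-s)(n-3)+2(n-1)(n-s)$ for $\alpha=2$. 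Since a nonconstant $f$ forces $1\leq s\leq n-1$, hence $s(n-s)\geq 1$, and since $n\geq 3$ gives $n-3\geq 0$, each of the three expressions is at least $1$ and in particular positive; thus $R_f> 1$ for every nonconstant $f$, and $\lambda(K_n\ba\{e\},K_2)\geq 1$ (indeed strictly, for each fixed $n$).

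I do not expect a genuine obstacle here: the argument is bookkeeping, and the only points that need care are tracking the degree deficit at $a$ and $b$ inside $\vol{S}$ and $\vol{T}$, remembering that the nonedge affects $e(S,T)$ only in the $\alpha=1$ case, and carrying out the cancellation so that the terms quadratic in $n-1$ vanish and leave the factor $n-3$. An essentially equivalent alternative is perturbative: in $K_n$ one has $\vol{G}\,e(S,T)/(\vol{S}\vol{T})=\tfrac{n}{n-1}>1$ for every bipartition, and deleting a single edge changes $\vol{G}$ by $-2$, changes $e(S,T)$ by at most $1$, and changes the two volumes in a controlled way, so one could instead bound the resulting error term; but the direct computation above is cleaner and is the one I would carry out.
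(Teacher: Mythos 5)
Your proposal is correct and follows essentially the same route as the paper: reduce $R_f$ for a $K_2$-valued map to $\vol{G}\,e(S,T)/(\vol{S}\vol{T})$ and case on how the two endpoints of the missing edge are split by the bipartition (the paper's two cases $f(v_0)=f(v_1)$ versus $f(v_0)\neq f(v_1)$ are exactly your $\alpha\in\{0,2\}$ and $\alpha=1$). Your algebra checks out in all three cases, so nothing further is needed.
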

\begin{proof}
Denote the vertices of $K_n\ba\{e\}$ as $v_0,...,v_{n-1}$ and let there be no edge between $v_0$ and $v_1$. Write $V(K_2)=\{0, 1\}$. It suffices to consider two cases, depending on whether $v_0$ and $v_1$ map to the same vertex in $K_2$. 

First, consider a function $f$ with $f(v_0)=f(v_1)$. Let $x$ denote the number of vertices other than $v_0, v_1$ with image $0$ and $y$ the number of vertices other than $v_0,  v_1$ with image $1$. Without loss of generality, suppose that $f(v_0)=f(v_1)=0$. Then
\[R_f=\frac{(n(n-1)-2)(xy+2y)}{xy(n-1)^2+2y(n-1)(n-2)}=\frac{xy(n+1)(n-2)+2y(n+1)(n-2)}{xy(n-1)^2+2y(n-1)(n-2)}\geq 1.\]
The final inequality follows from the fact that $(n+1)(n-2)$ is greater than $(n-1)^2$ whenever $n\geq 3$. 

For the second case, assume that $f(v_0)=0$, $f(v_1)=1$, and $x$ and $y$ are defined as before. Then
\[R_f=\frac{(n(n-1)-2)(xy+x+y)}{x(n-1)(n-2)+(n-2)^2+xy(n-1)^2+y(n-1)(n-2)}=\frac{xy(n+1)(n-2)+(n+1)(n-2)^2}{xy(n-1)^2+(n-1)(n-2)^2+(n-2)^2}.\]
The second equality follows from the fact that $x+y=n-2$. Dividing the numerator and denominator by $(n-1)^2$, it is clear that this ratio is at least 1, concluding the proof.
\end{proof}

\begin{cor}
$\lim_{n\rightarrow\infty}\lambda(K_n\ba\{e\},K_2)=1$
\end{cor}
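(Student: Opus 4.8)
The plan is to combine the two bounds we already have in hand. From Theorem~\ref{T:mainone} (the complete-graph bound) we know $\lambda(K_n\ba\{e\}, K_2) \leq \frac{n}{n-1}$, since $K_n\ba\{e\}$ is a connected graph on $n$ vertices that is not complete, so in fact the inequality is strict, but the non-strict version $\lambda(K_n\ba\{e\}, K_2)\leq \frac{n}{n-1}$ is all we need. From the theorem immediately preceding this corollary we have $\lambda(K_n\ba\{e\}, K_2)\geq 1$ for all $n\geq 3$.

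So the argument is a one-line squeeze: for all $n\geq 3$,
\[
1 \leq \lambda(K_n\ba\{e\}, K_2) \leq \frac{n}{n-1}.
\]
Since $\frac{n}{n-1}\to 1$ as $n\to\infty$, the squeeze theorem gives $\lim_{n\to\infty}\lambda(K_n\ba\{e\}, K_2) = 1$. I would write this out directly, citing the preceding theorem for the lower bound and Theorem~\ref{T:mainone} (or equivalently the theorem stating $\lambda(G,X)\leq \frac{n}{n-1}$ for connected $G$) for the upper bound.

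There is essentially no obstacle here; the corollary is purely a limiting consequence of two inequalities already proven. The only thing to be slightly careful about is that the upper bound $\frac{n}{n-1}$ requires $K_n\ba\{e\}$ to be connected, which holds for $n\geq 3$, matching the hypothesis range of the lower-bound theorem. One could alternatively sharpen the upper bound by evaluating $R_f$ on a near-optimal function, but that is unnecessary for the stated limit.
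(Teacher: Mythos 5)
Your proof is correct and matches the paper's argument exactly: both squeeze $\lambda(K_n\ba\{e\},K_2)$ between the lower bound $1$ from the preceding theorem and the upper bound $\frac{n}{n-1}$ from Theorem~\ref{T:mainone}, then let $n\to\infty$. No issues.
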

\begin{proof}
Combine the previous theorem with the result from Theorem \ref{T:mainone}, so that \[\lambda(K_n\ba\{e\},K_2)< \lambda(K_n,K_2)=\frac{n}{n-1}\rightarrow 1.\]
\end{proof}
Note that since $\lambda(K_3\ba\{e\},K_2)=\frac{4}{3}\geq 1$, transforming a graph from $K_n\ba\{e\}$ to $K_{n+1}\ba\{e\}$ can decrease lambda. This means that taking a graph and adding a new vertex which is connected to all previous vertices can decrease lambda, although intuitively it might appear that this should produce a graph that is ``better'' connected than the original. We note that this operation can also increase $\lambda$; if $G=K_{2, 2}$, for example, then adding a vertex adjacent to all four original vertices will produce a graph with a larger $\lambda$ value with respect to $K_2$.

\bibliographystyle{abbrv}  
 \bibliography{bib_items}

\end{document}